\numberwithin{equation}{section}
\newtheorem{thm}{Theorem}[section]
\newtheorem{defin}[thm]{Definition}
\newtheorem{lem}[thm]{Lemma}
\newtheorem{prop}[thm]{Proposition}
\newtheorem{remark}[thm]{Remark}
\newenvironment{enum(a)}{\begin{enumerate}}{\end{enumerate}}
\newenvironment{enum(i)}{\begin{enumerate}}{\end{enumerate}}
\newcommand{\CCC}{{\mathbb{C}}}
\newcommand{\EEE}{{\mathbb{E}}}
\newcommand{\NNN}{{\mathbb{N}}}
\newcommand{\RRR}{{\mathbb{R}}}
\newcommand{\ZZZ}{{\mathbb{Z}}}
\newcommand{\cA}{{\mathcal{A}}}
\newcommand{\cB}{{\mathcal{B}}}
\newcommand{\cC}{{\mathcal{C}}}
\newcommand{\cE}{{\mathcal{E}}}
\newcommand{\cF}{{\mathcal{F}}}
\newcommand{\cG}{{\mathcal{G}}}
\newcommand{\cL}{{\mathcal{L}}}
\newcommand{\cO}{{\mathcal{O}}}
\newcommand{\cT}{{\mathcal{T}}}
\newcommand{\cX}{{\mathcal{X}}}
\newcommand{\gra}{{\alpha}}
\newcommand{\grg}{{\gamma}}
\newcommand{\grd}{{\delta}}
\newcommand{\grl}{{\lambda}}
\newcommand{\grr}{{\rho}}
\newcommand{\grs}{{\sigma}}
\newcommand{\gro}{{\omega}}
\newcommand{\grD}{{\Delta}}
\newcommand{\grL}{{\Lambda}}
\newcommand{\1}{{\mathbbm{1}}}
\newcommand{\dd}{{\mathrm{d}}}
\title{Long-range orientational order of a random near lattice hard sphere and hard disk process}
\author{Alexisz Tam\'as Ga\'al}
\affil{CIMS, New York University 251 Mercer St, New York, NY 10012 USA\\
E-mail: gaal@cims.nyu.edu}
\date{first version: April 23, 2018\\
	second version: September 16, 2019}
\begin{document}
\maketitle

\small
\begin{abstract}
We show that a point process of hard spheres exhibits long-range orientational order. This process is designed to be a random perturbation of a three-dimensional lattice that satisfies a specific rigidity property, examples include the FCC and HCP lattices. We also define two-dimensional near-lattice processes by local, geometry dependent hard disk conditions. Earlier results about existence of long-range orientational order carry over and we obtain the existence of infinite-volume measures on two-dimensional point configurations that turn out to follow the orientation of a fixed triangular lattice arbitrary closely.

\textbf{Keywords:} spontaneous symmetry breaking; hard sphere; hard disk; rigidity estimate\\
\textbf{Subject classif.:}  60K35; 82B21
\end{abstract}

\normalsize

\section{Introduction}

Random hard disk and hard sphere processes are one of the most easily defined, physically interesting point processes. Rigorous mathematical results about their behavior at high intensity are limited to two-dimensional systems. It remains an open question whether phase transition with possibly orientational symmetry breaking occurs in either two or three dimensions. If breaking of rotational symmetry in either of these models could be shown, it gave rise to speculation whether such simple pair interaction could result in crystallization phenomena. In order to simplify the models, we exclude cavities and other crystal defects from the models and study random hard disk and sphere processes that are locally crystals. In our models, being locally crystal implies being a crystal on a long range. There is a lower bound on orientational correlation that is uniform in the distance and this bound can be made arbitrarily large by taking very "tight" boundary conditions. Theorem \ref{thm1} is the first rigorous result about hard sphere long-range orientational order to our knowledge.

In the previous work \cite{AG14}, we considered hard disk processes with disks of radius $1/2$ that have the structure of a triangular lattice and neighboring disks have an upper bound on their distance. We showed the existence of a natural "uniform" measures on these allowed configurations that exhibit uniform long-range orientational order. In the first half of this work, we show that the same arguments apply to some three-dimensional lattices. In the second half, we show that the result in the two-dimensional case can be formulated independently of an underlying triangular lattice structure that was explicitly present in the definition of the probability measures in \cite{AG14}. Thus we show that being a crystal locally implies being a crystal on the long range in this particular model. We only require the local, geometry dependent condition that every point has exactly six points in an annulus with radii $1$ and $1+\gra$ around them. We will have the parameter $\gra$ in both sections that gives the maximal distance of neighboring points. This $\gra$ needs to be sufficiently small so that some local conditions are fulfilled, however it is on the macroscopic order of about $1/2$, so not particularly small. Fluctuations from the orientation of a fixed lattice however can be made arbitrary small, in particular they can be made many orders smaller than $\gra$.

Similar but not hard-core models were considered in \cite{MR09} without defects and in \cite{HMR} and \cite{A15} with lattice defects. Introducing bounded, separated missing regions as defects into our two-dimensional model is possible using similar techniques as in \cite{HMR}. For three-dimensions, we think it is possible but we haven't carried it out. Also the techniques of section 3 can possibly be carried out in three-dimension, but an analogue of Lemma \ref{enum} is required together with considering boundary conditions, since in three-dimensions several close-packed lattices are possible analogues of the triangular lattice.

These simplified models with well defined lattice structure and possible defects are motivated by more natural hard sphere models defined with respect to a Poisson point process at a given intensity $z>0$. The set of Gibbs measures for these natural models is defined similarly to our definition of $\cG^z$ in section 3. They are basically sequential limits of Poisson point processes in bounded domains -- as the domains tend to $\RRR^d$ -- conditioned that no pair of points have distance smaller than one. In these natural models, instead of imposing complex geometry dependent interactions, merely hard-core repulsion is required. As a consequence, even at high intensity, all kind of possible lattice defects emerge as soon as the domain gets large enough. It is believed that in dimensions two and greater there are multiple Gibbs measures in $\cG^z$ for high enough intensity $z$. Their structure is believed to differ in the typical relative orientation of nearby points. It is shown in \cite{R07} that in dimension two any of these measures in $\cG^z$ are translational invariant at any intensity $z>0$, and in \cite{R16} a logarithmic lower bound is given on the mean square translational displacement of particles. These results prevent Gibbs measures from having long-range positional order. One strategy of showing that $\cG^z$ is not a singleton for $d\geq 2$ and $z>0$ high enough, is the search for a measure in $\cG^z$ that is not rotational invariant. Existence of such is called the breaking of rotational symmetry (of the energy function). Showing that such measure is supported on a perturbed lattice structure with long-range orientational order would be an even stronger result which is connected to the widely studied crystallization problem, even though the crystallization problem is mostly studied for different interactions.

We would also like to mention the recent result \cite{HT17} that at low intensity disagreement percolation results imply the uniqueness of the Gibbs state. While at high intensity it is shown in \cite{A14} that hard disks percolate with the percolation radius chosen sufficiently big. Percolation is necessary for crystallization, but to our knowledge breaking of rotational symmetry cannot be concluded from it.

\section{The three-dimensional enumerated model}

In this section we show that the arguments of \cite{AG14} can be applied to some three-dimensional lattices to obtain similar results as in \cite{AG14} about long-range orientational order for random perturbations of such lattices. 

\subsection{Configuration space}

We consider three-dimensional lattices with well defined distance between nearest neighbors (to be normalized to 1) that fulfill two conditions. Firstly, the nearest neighbor edges of the lattice have to define a tessellation of $\RRR^3$ by regular tetrahedra and octahedra. Secondly, the lattice has to be translational invariant in three linearly independent directions. We remark that regular tetrahedra and octahedra can be replaced by any rigid polyhedron (a polyhedron with all faces being triangles) that satisfies an analogue of the rigidity estimates in Lemmas \ref{triangle} and \ref{octahedron}, and their volume has positive partial derivatives with respect to their edge lengths. We note that by Cauchy's theorem, the volume is uniquely defined for rigid polyhedra when the edge lengths are given.

Examples of such lattices are the face-centered cubic lattice and the hexagonal close-packed lattice. For definitions see \cite{LPS16}. Note that being translational invariant doesn't mean that the lattice has to be a Bravais lattice, i.e. of the form $\ZZZ n_1 + \ZZZ n_2+\ZZZ n_3$ for some vectors $n_i\in\RRR^3$. Bravais lattices are translational invariant but a union of Bravais lattices might be still translational invariant, however not a Bravais lattice anymore for which the hexagonal close-packed lattice serves as examples.

Let the set $I\subset\RRR^3$ denote one of the lattices that fulfill both criteria. We assume $0\in I$ and think of $I$ as an index set which is going to be used to parametrize countable point configurations in $\RRR^3$. Let $I$ have translational symmetry by the linearly independent vectors $t_1, t_2, t_3\in\RRR^3$ and define the set $T=\ZZZ t_1+ \ZZZ t_2+ \ZZZ t_3$. Define the quotient space $I_n:=I / nT$.  We will think of $I_n$ as a specific set of representatives in the half-open parallelepiped $U_n$ spanned by $nt_1, nt_2, nt_3$, i.e. $U_n=n\{x t_1+ y t_2+ z t_3\ |\ x,y,z\in[0,1)\}$.

A \emph{parametrized point configuration} in $\RRR^3$ is a map $\omega: I\rightarrow \RRR^2$, $x\mapsto \omega(x)$ that determines the point configuration $\{ \omega(x)\ |\ x\in I \}\subset \RRR^3$.  For the set of all parametrized point configurations we introduce the character $\Omega=\{\omega: I\rightarrow \RRR^2\}$. Note that a single point configuration $\{ \omega(x)\ |\ x\in I \}\subset$ can be parametrized by many different $\omega\in\Omega$.

Let $\gra\in(0,1]$ be an arbitrary but fixed real to be fixed later. An \emph{$n$-periodic parametrized point configuration} with edge length $l\in(1, 1+\gra)$ is a parametrized configuration $\omega$ which satisfies the boundary conditions:

\begin{equation}
\omega(x+nt_i)=\omega(x)+lnt_i \quad \textrm{for all}\ x \in I \textrm{ and } i\in\{1,2,3\}.
\label{periodic}
\end{equation}

The set of $n$-periodic parametrized configurations with edge length $l$ is denoted by $\Omega^{per}_{n, l}\subset \Omega$. From now on we will omit the word parametrized because, in this section, we are going to work solely with \emph{point configurations} which are parametrized by $I$. An $n$-periodic configuration is uniquely determined by its values on $I_n$. Therefore, we identify $n$-periodic configurations $\omega\in\Omega^{per}_{n,l}$ with functions $\omega: I_n\rightarrow \RRR^2$.

The bond set $E\subset I\times I$ contains index-pairs with Euclidean distance one; this is $E=\{(x,y)\in I\times I\ |\ |x-y|=1\}$. We set $E_n=E/ nT $, we can think of $E_n$ as a bond set $E_n\subset I_n\times I_n$. Let $\cT$ denote the set of convex polyhedra, as in the definition of $I$, whose edges are in $E$ and provide a tessellation of $\RRR^3$, which is the Delaunay pre-triangulation, see \cite{LPS16}. Define $\cT_n=\cT/ nT$. Each $\triangle \in\cT$ can be triangulated into tetrahedra (not necessarily uniquely), let us fix such a $T$-periodic triangulation of $\cT$. The set of all (necessarily not all regular) tetrahedra created this way define a tessellation of $\RRR^3$ and is denoted by $\textrm{triang}(\cT)$. We define $\textrm{triang}(\cT_n):=\textrm{triang}(\cT)/ nT$. 
\subsection{Probability space}

By definitions of $\Omega$ and $\Omega^{per}_{n, l}$, we have $\Omega=(\RRR^3)^I$ and can identify $\Omega^{per}_{n, l}=(\RRR^3)^{I_n}$. Both sets are endowed with the corresponding product $\sigma$-algebras $\cF=\bigotimes_{x\in I}\cB(\RRR^3)$ and $\cF_n=\bigotimes_{x\in I_n}\cB(\RRR^3)$ where $\cB(\RRR^3)$ denotes the Borel $\sigma$-algebra on each factor.
The event of admissible $n$-periodic configurations $\Omega_{n, l}\subset \Omega^{per}_{n, l}$ is defined by the properties $(\Omega 1)-(\Omega 4)$:

$(\Omega 1) \quad |\omega(x)-\omega(y)|\in (1, 1+\gra)$ for all $(x,y)\in E$.

For $\omega\in\Omega$ we define the extension $\hat\omega: \RRR^3\to\RRR^3$ such that $\hat\omega(x)=\omega(x)$ if $x\in I$. On the closure of a tetrahedron $\triangle\in\textrm{triang}(\cT)$, the map $\hat\omega$ is defined to be the unique affine linear extension of the mapping defined on the corners of that tetrahedron.

$(\Omega 2)\quad$ The map $\hat\omega: \RRR^3\to\RRR^3$ is injective (and thus bijective).

$(\Omega 3) \quad$ The map $\hat\omega$ is almost everywhere orientation preserving, this is to say that $\det(\nabla\hat\omega(x))>0$ for almost every $x\in\RRR^3$ with the Jacobian $\nabla\hat\omega: \RRR^3\to\RRR^{3\times3}$.

$(\Omega 4) \quad$ The image $\hat\omega(\triangle)$ of a polyhedron $\triangle\in\cT$ is a convex polyhedron.

The conditions $(\Omega 3)$ and $(\Omega 4)$ follow from conditions $(\Omega 1)$ and $(\Omega 2)$ up to the sign of the determinant in $(\Omega 3)$ as it was also remarked in \cite{HMR} on page 4. Since the proof is more analytic than stochastic, we also omit the proof and require them as technical conditions. Define the set of \emph{admissible $n$-periodic configurations, with edge length $l$} as

\[
\Omega_{n, l}=\{\omega\in\Omega_{n, l}^{per}\ |\ \omega\ \textrm{satisfies}\ (\Omega1)-(\Omega4)\}.
\]

The set $\Omega_{n, l}$ is open and non-empty subsets of $(\RRR^3)^{I_n}$. The scaled lattice $\omega_l(x)=lx$ for $x\in I$ and $1<l<1+\gra$ is an element of $\Omega_{n, l}$. Any configuration $\gro\in \Omega_{n, l}$ is determined by a finite number of locations in $\RRR^3$. Each property $(\Omega 1)-(\Omega 4)$ is satisfied after small enough perturbation of these locations, therefore any $\gro\in \Omega_{n, l}$ has a neighborhood that is fully contained in $\Omega_{n, l}$, hence the openness of $\Omega_{n, l}$.

Clearly,  $0<\delta_0\otimes\lambda^{I_n \setminus\{0\}} (\Omega_{n, l})<\infty$ with the Lebesgue measure $\lambda$ on $\RRR^3$ and the Dirac measure $\delta_0$ in $0\in\RRR^3$. The lower bound holds because $\Omega_{n, l}^0$ is non-empty and open in $(\RRR^3)^{I_n\setminus\{0\}}$ (similarly to the case of $\Omega_{n, l}$ above); the upper bound is a consequence of the parameter $\gra$ in $(\Omega 1)$. Let the probability measure $P_{n, l}$ be

\[
P_{n, l}(A)=\frac{\delta_0\otimes\lambda^{I_n \setminus\{0\}} (\Omega_{n, l}\cap A)}{\delta_0\otimes\lambda^{I_n \setminus\{0\}} (\Omega_{n, l})}
\]

for any Borel measurable set $A\in \cF_n$, thus $P_{n, l}$ is the uniform distribution on the set $\Omega_{n, l}$ with respect to the \emph{reference measure} $\delta_0\otimes\lambda^{I_n \setminus\{0\}}$. The first factor in this product refers to the component $\omega(0)$ of $\omega\in\Omega$.

\subsection{Result}

We have the following finite-volume result.

\begin{thm} \label{thm1} For $\gra$ sufficiently small one has

\begin{equation}
\lim_{l\downarrow 1}\sup_{n\in\NNN}\sup_{\triangle\in\textrm{triang}(\cT_n)} E_{P_{n,l}}[\ |\nabla\hat\omega(\triangle)-\textnormal{Id} |^2\ ]=0
\label{fnvolume}
\end{equation}

with the constant value of the Jacobian $\nabla\hat\omega(\triangle)$ on the tetrahedron $\triangle$ from the triangulation of $\cT_n$ and some norm $| \cdot |$ on $\RRR^{3\times 3}$.
\end{thm}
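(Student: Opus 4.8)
The plan is to reduce \eqref{fnvolume} to two deterministic identities coming from the periodic boundary condition \eqref{periodic}, feed in the rigidity estimates of Lemmas \ref{triangle} and \ref{octahedron} cell by cell, and finally convert a volume-averaged bound into the per-tetrahedron bound by invoking stationarity of $P_{n,l}$. The key point is that a sharp deterministic bound is available not for a single $\triangle$ but only for the $\mathrm{vol}$-average over all $\triangle$; the expectation and the translation symmetry of $P_{n,l}$ are what promote it to $\sup_\triangle E_{P_{n,l}}$.

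First I would record, for every $\omega\in\Omega_{n,l}$, the two global relations. Since $\hat\omega$ is injective and orientation preserving by $(\Omega 2)$–$(\Omega 3)$ and $\nabla\hat\omega$ is constant on each tetrahedron, the change of variables formula together with the fact that \eqref{periodic} makes $\hat\omega(U_n)$ a fundamental domain of $l\,nT$ gives the \emph{volume identity}
\[
\sum_{\triangle\in\mathrm{triang}(\cT_n)}\det\bigl(\nabla\hat\omega(\triangle)\bigr)\,\mathrm{vol}(\triangle)=\mathrm{vol}\bigl(\hat\omega(U_n)\bigr)=l^3\,\mathrm{vol}(U_n).
\]
Because $\nabla\hat\omega$ is $nT$-periodic and sends the period vectors $nt_i$ to $lnt_i$, its average over $U_n$ is the macroscopic gradient, yielding the \emph{mean-gradient identity}
\[
\sum_{\triangle}\nabla\hat\omega(\triangle)\,\mathrm{vol}(\triangle)=l\,\mathrm{vol}(U_n)\,\mathrm{Id},\qquad\text{so}\qquad\sum_{\triangle}\mathrm{tr}\,\nabla\hat\omega(\triangle)\,\mathrm{vol}(\triangle)=3l\,\mathrm{vol}(U_n).
\]
Writing $|A-\mathrm{Id}|^2=|A|^2-2\,\mathrm{tr}\,A+3$ for the Frobenius norm, summing over $\triangle$ and substituting the trace identity, everything collapses to
\[
\sum_{\triangle}|\nabla\hat\omega(\triangle)-\mathrm{Id}|^2\,\mathrm{vol}(\triangle)=\sum_{\diamond\in\cT_n}\bigl(\cD_\diamond-\cD_\diamond^{0}\bigr)+6(1-l)\,\mathrm{vol}(U_n),
\]
where $\cD_\diamond=\int_\diamond|\nabla\hat\omega|^2$ and $\cD_\diamond^{0}=3\,\mathrm{vol}(\diamond)$ is its value at the identity.

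Next I would apply rigidity cell by cell. For a cell $\diamond\in\cT_n$ both $\cD_\diamond$ and $\cV_\diamond:=\int_\diamond\det\nabla\hat\omega$ are functions of the edge lengths of $\diamond$ (for an octahedron this uses Cauchy's theorem, since the realized shape, and hence the diagonals used in its triangulation, are determined by the twelve boundary edges). Lemmas \ref{triangle} and \ref{octahedron} should provide, for $\gra$ sufficiently small and over the whole admissible edge range, the one-sided estimate $\cD_\diamond-\cD_\diamond^{0}\le C\,(\cV_\diamond-\cV_\diamond^{0})$, together with $\cV_\diamond-\cV_\diamond^{0}\ge0$ (a consequence of all edges exceeding $1$ in $(\Omega 1)$ and the positivity of the partial derivatives of the volume in the edge lengths assumed in the setup). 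Summing and using the volume identity, $\sum_\diamond(\cV_\diamond-\cV_\diamond^{0})=(l^3-1)\,\mathrm{vol}(U_n)$, whence
\[
\frac{1}{\mathrm{vol}(U_n)}\sum_{\triangle}|\nabla\hat\omega(\triangle)-\mathrm{Id}|^2\,\mathrm{vol}(\triangle)\ \le\ C\,(l^3-1)-6(l-1)\ \xrightarrow[l\downarrow1]{}\ 0,
\]
a bound that is deterministic and uniform in $n$ and in $\omega\in\Omega_{n,l}$.

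Finally I would upgrade this averaged bound to the stated supremum. The measure $P_{n,l}$ is invariant under the shifts of $T$ (the Dirac factor at $0$ only fixes the global translation, while the gradient field $\{\nabla\hat\omega(\triangle)\}$ is stationary), so $E_{P_{n,l}}[\,|\nabla\hat\omega(\triangle)-\mathrm{Id}|^2\,]$ depends only on the finitely many $T$-classes of $\triangle$ in $\mathrm{triang}(\cT)/T$. Each such class has exactly $n^3$ representatives in $U_n$ with $n$-independent volume weights $w_j=v_j/\sum_i v_i>0$, so taking $E_{P_{n,l}}$ of the displayed inequality exhibits a convex combination of the class-wise expectations bounded by $C(l^3-1)-6(l-1)$; dividing by $\min_j w_j$ controls the maximum, giving $\sup_{\triangle}E_{P_{n,l}}[\,|\nabla\hat\omega(\triangle)-\mathrm{Id}|^2\,]\le \tfrac{\sum_i v_i}{\min_j v_j}\,[\,C(l^3-1)-6(l-1)\,]$, which tends to $0$ as $l\downarrow1$ uniformly in $n$, as claimed. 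I expect the main obstacle to be the octahedron estimate (Lemma \ref{octahedron}): because the triangulation diagonals are not constrained by $(\Omega 1)$, one must pass through Cauchy rigidity to express $\cD_\diamond$ and $\cV_\diamond$ as functions of the boundary edges and then establish the inequality $\cD_\diamond-\cD_\diamond^{0}\le C(\cV_\diamond-\cV_\diamond^{0})$ uniformly over the full admissible range — not merely infinitesimally at the regular octahedron — which is precisely what forces $\gra$ to be chosen small.
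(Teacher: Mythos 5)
Your architecture diverges from the paper's at its core. The paper's proof runs through the Friesecke--James--M\"uller geometric rigidity theorem (Theorem \ref{fries}), which it explicitly calls the central argument: Lemmas \ref{triangle} and \ref{octahedron} bound $\textnormal{dist}^2(\nabla\hat\omega,\textnormal{SO}(3))$ cell by cell by the \emph{squared} edge-length deviations; Theorem \ref{fries} upgrades this to closeness to a single rotation $R(\omega)$ on all of $U_n$; periodicity and Pythagoras replace $R(\omega)$ by $l\,\textnormal{Id}$; and the squared edge deviations are bounded by $\gra$ times the signed (positive) deviations, hence by $\gra$ times the volume excess. You instead use the mean-gradient identity to expand $|A-\textnormal{Id}|^2=|A|^2-2\,\textnormal{tr}A+3$ and reduce everything to a cell-wise bound of the Dirichlet excess $\cD_\diamond-\cD_\diamond^0$ by the volume excess, never invoking Theorem \ref{fries}. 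Your two global identities and the final stationarity step are correct and match the paper's use of periodicity and $T$-invariance; if your cell-wise estimate held, this would be a more elementary route.

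The gap is precisely that cell-wise estimate: the claim that Lemmas \ref{triangle} and \ref{octahedron} ``should provide'' $\cD_\diamond-\cD_\diamond^0\le C(\cV_\diamond-\cV_\diamond^0)$. They do not. Those lemmas bound $\textnormal{dist}^2(A,\textnormal{SO}(3))$, which is \emph{second order} in the deviation of $A$ from a rotation, whereas $|A|^2-3$ is \emph{first order} (for $A=\lambda R$ one has $|A|^2-3=3(\lambda^2-1)$ but $\textnormal{dist}^2(A,\textnormal{SO}(3))=3(\lambda-1)^2$), so neither quantity controls the other. For the regular tetrahedron your inequality can be rescued without any rigidity lemma, because $\sum_{i=1}^6 w_iw_i^T=2\,\textnormal{Id}$ gives $|A|^2=\tfrac12\sum_i|Aw_i|^2$, after which $(\Omega 1)$ and the paper's Taylor expansion of the volume (inequality (\ref{area1})) yield the bound. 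For the octahedron it is genuinely problematic: on each sub-tetrahedron of the triangulation along $P_1P_4$ the analogous identity reads $|A_k|^2=\sum_i c_i|A_ku_i|^2$ with the coefficient of the \emph{diagonal} equal to $-\tfrac12$. The diagonal is not constrained by $(\Omega 1)$ and can shrink while all twelve boundary edges lie in $(1,1+\gra)$ (push the two apices together while the equatorial square expands), so the Dirichlet excess acquires a positive contribution $-\tfrac12\bigl(|A_k d|^2-2\bigr)>0$ that is not controlled by the boundary-edge excesses without a separate quantitative rigidity computation giving the first-order response of the diagonal to the twelve boundary edges. You flag this as the main obstacle but do not carry it out, and it cannot be outsourced to Lemma \ref{octahedron}; as written, the central step of your argument is unproven for the octahedral cells.
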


The choice of $\gra$ has to be such that the volume of any tetrahedron and octahedron with side lengths in $[1, 1+\gra]$ is uniquely minimized by the regular tetrahedron and octahedron with side length 1 respectively (see proof of Theorem \ref{thm1}). The central argument is going to be the following rigidity theorem from \cite[Theorem 3.1]{Fries}.

\begin{thm}[Friesecke, James and M\"uller]
Let $U$ be a bounded Lipschitz domain in $\RRR^d, \ d\geq 2$. There exists a constant $C(U)$ with the following property: For each $v\in W^{1,2}(U, \RRR^d)$ there is an associated rotation $R\in \textnormal{SO($d$)}$ such that

\begin{equation*}
||\nabla v-R||_{L^2(U)}\leq C(U)||\textnormal{dist}(\nabla v, \textnormal{SO}(d))||_{L^2(U)}.
\end{equation*}

\label{fries}
\end{thm}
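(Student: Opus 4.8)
The plan is to follow the three-pillar strategy underlying proofs of this geometric rigidity estimate: the infinitesimal (linear) rigidity encoded in Korn's inequality, a Lipschitz truncation that brings an arbitrary $W^{1,2}$ map into the regime where the linear theory bites, and a covering-and-chaining argument that upgrades a local estimate on a cube to the stated global one on $U$. Throughout I write $F\mapsto\mathrm{dist}(F,\mathrm{SO}(d))$ for the pointwise defect and $E:=\|\mathrm{dist}(\nabla v,\mathrm{SO}(d))\|_{L^2(U)}^{2}$ for the right-hand side. I would first prove the estimate on a cube $Q$ and only afterwards assemble the general Lipschitz domain, since the constant's dependence on $U$ is entirely a matter of the covering geometry.

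The linear heart is as follows. Near $\mathrm{SO}(d)$ the polar decomposition gives $\mathrm{dist}(F,\mathrm{SO}(d))=|\sqrt{F^{T}F}-\mathrm{Id}|$, which to leading order equals the infinitesimal strain $|\mathrm{sym}(R^{T}F)-\mathrm{Id}|$ for the nearest rotation $R$; hence in a fixed $\delta$-neighbourhood of $\mathrm{SO}(d)$ the nonlinear defect and the linear strain are comparable up to a factor $1+O(\delta)$. For a map whose gradient lies uniformly in this neighbourhood one post-composes with a fixed rotation to arrange $\nabla v$ near $\mathrm{Id}$, applies Korn's second inequality to $x\mapsto v(x)-x$ to obtain a constant skew matrix $A$ with $\|\nabla v-\mathrm{Id}-A\|_{L^2(Q)}\le C_{Q}\|\mathrm{sym}\nabla v-\mathrm{Id}\|_{L^2(Q)}$, and finally replaces $\mathrm{Id}+A$ by the genuine rotation $R=\exp(A)$, the correction $|\exp(A)-(\mathrm{Id}+A)|\le C|A|^{2}$ being absorbed because the $L^{\infty}$ smallness forces $A$ to be small. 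This yields the estimate whenever $\mathrm{dist}(\nabla v,\mathrm{SO}(d))\le\delta$ holds pointwise.

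To remove the pointwise smallness I would invoke the maximal-function Lipschitz truncation: for each level $\lambda$ there is $w\in W^{1,\infty}(Q,\RRR^{d})$ with $\|\nabla w\|_{L^\infty}\le c\lambda$, with $w=v$ off a bad set $B_\lambda\subseteq\{M(|\nabla v|)>\lambda\}$, and with $|B_\lambda|\le c\lambda^{-2}\int_{\{|\nabla v|>\lambda\}}|\nabla v|^{2}$. Choosing $\lambda$ a fixed multiple of $\sqrt d$ one checks, using $|F|\le\mathrm{dist}(F,\mathrm{SO}(d))+\sqrt d$, that the region where $|\nabla v|$ is large sits inside the region where the defect is large, so that the truncation error $\int_{B_\lambda}(|\nabla v|^{2}+|\nabla w|^{2})$ and the quantity $\|\mathrm{dist}(\nabla w,\mathrm{SO}(d))\|_{L^2(Q)}^{2}$ are both bounded by $cE$; transferring a rigidity estimate for $w$ back to $v$ then costs only $cE$. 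The delicate point — and the step I expect to be the main obstacle — is that the truncation controls $|\nabla w|$ but not its distance to $\mathrm{SO}(d)$, so $\nabla w$ need not lie in the $\delta$-neighbourhood where the previous paragraph applies. Bridging this gap is the technical core: one must run a compactness-and-contradiction argument for bounded-gradient maps in which Korn's inequality is precisely what promotes weak $W^{1,2}$ convergence of suitably rescaled displacements to \emph{strong} convergence, thereby contradicting the normalization, and keeping the resulting constant independent of the truncation level is where care is needed.

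Finally I would pass from the cube to the bounded Lipschitz domain $U$ by a covering argument. Cover $U$ by finitely many balls whose intersections with $U$ are bi-Lipschitz images of a ball (the Lipschitz-graph structure guarantees a covering with uniformly bounded overlap), apply the cube estimate on each to obtain rotations $R_i$ with $\int_{B_i\cap U}|\nabla v-R_i|^{2}\le CE_i$, and note that on any overlap both $R_i$ and $R_j$ are $L^2$-close to $\nabla v$, whence $|R_i-R_j|^{2}\le C(E_i+E_j)/|B_i\cap B_j\cap U|$. Chaining these inequalities along the overlap graph — connected, since $U$ is a domain — to a reference ball produces a single rotation $R$ with $\sum_i|R_i-R|^{2}\,|B_i\cap U|\le C(U)E$, and summing the local estimates gives $\|\nabla v-R\|_{L^2(U)}^{2}\le C(U)E$. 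The constant accumulates the number of patches, the overlap volumes and the diameter of the chaining graph, all finite and determined by the Lipschitz geometry of $U$; connectedness is essential, since on a disconnected $U$ the two pieces could carry differently rotated rigid deformations with $E=0$ yet admit no common $R$.
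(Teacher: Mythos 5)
The first thing to note is that the paper does not prove Theorem \ref{fries} at all: it is quoted verbatim from \cite{Fries} (Theorem 3.1 there), and the only original content the paper adds is the scaling invariance of the constant in Remark \ref{remark}. So your proposal must be measured against the original Friesecke--James--M\"uller argument. Your second and third pillars do match it: the maximal-function truncation with error controlled via $|F|\le\mathrm{dist}(F,\mathrm{SO}(d))+\sqrt d$ is exactly their Proposition A.1, and the covering-and-chaining passage from cubes to Lipschitz domains is how they globalize. The genuine gap is in your ``linear heart'', which is circular as stated: the pointwise bound $\mathrm{dist}(\nabla v(x),\mathrm{SO}(d))\le\delta$ only places $\nabla v(x)$ near a spatially \emph{varying} rotation field $R(x)$, and ``post-composing with a fixed rotation to arrange $\nabla v$ near $\mathrm{Id}$'' presupposes that $R(x)$ is nearly constant --- which is precisely the assertion to be proved. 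Indeed $\mathrm{sym}(\nabla v)-\mathrm{Id}=\mathrm{sym}(\nabla v-R(x))+(\mathrm{sym}\,R(x)-\mathrm{Id})$, and for a wandering rotation field the second term is of order one, so the strain that Korn's inequality sees is not small; only the curl-free structure of gradients excludes such fields, and quantifying that exclusion is the theorem itself (in the $L^\infty$-small regime this is F.~John's rotation-and-strain theorem, whose proof is substantially harder than Korn).

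The bridge you yourself flag as the main obstacle is likewise fatal in the form proposed: a compactness-and-contradiction argument (Rellich plus the qualitative Liouville/Reshetnyak rigidity for maps with gradients in $\mathrm{SO}(d)$) can only deliver the statement ``for every $\epsilon>0$ there is $\delta>0$ such that $E\le\delta$ implies $\min_R\|\nabla v-R\|_{L^2}^2\le\epsilon$'', and no bookkeeping of constants upgrades this $\epsilon$--$\delta$ statement to the \emph{linear} bound $\min_R\|\nabla v-R\|_{L^2}^2\le CE$ valid without any smallness of $E$ --- yet your chaining step, via $|R_i-R_j|^2\le C(E_i+E_j)/|B_i\cap B_j\cap U|$, needs exactly this linear form. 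The missing quantitative engine in \cite{Fries} is the Piola identity: $\mathrm{div}\,\mathrm{cof}\,\nabla v=0$ distributionally, hence $\Delta v=\mathrm{div}(\nabla v-\mathrm{cof}\,\nabla v)$; since $F\mapsto F-\mathrm{cof}\,F$ vanishes on $\mathrm{SO}(d)$ (where $\mathrm{cof}\,R=R$) and is Lipschitz on $\{|F|\le M\}$, the truncated map is almost harmonic with $\|\Delta v\|_{W^{-1,2}}\le C(M)E^{1/2}$. Comparison with the harmonic function sharing its boundary values, interior regularity for harmonic functions (which makes $\nabla$ of the comparison map uniformly close to $\mathrm{SO}(d)$ and nearly constant on interior cubes, thereby producing the single rotation you were missing), and a covering plus weighted Poincar\'e iteration then give the linear estimate, first in the interior and then up to the boundary. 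Korn's inequality is never invoked --- Friesecke, James and M\"uller note rather that their estimate \emph{implies} a geometric version of Korn. To repair your sketch, replace pillar one and the compactness bridge by this harmonic-comparison step; your truncation and chaining pillars then go through essentially as written, with the $n$-uniformity of the constant given by the scaling argument of Remark \ref{remark}.
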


This is a generalization of Liouville's theorem, which states that a map is necessarily a rotation whose Jacobian is a rotation in every point of its domain. We are going to set $v=\hat\omega|_{U_n}$ and $U=U_n$ which is a bounded Lipschitz domain. The function $\hat\omega|_{U_n}$  is linear on each triangle $\triangle\in\cT_n$, thus piecewise affine linear on $U_n$. As a consequence, $\hat\omega|_{U_n}$ belongs to the class $W^{1,2}(U_n, \RRR^{3})$. The following remark, which also appears in \cite{Fries} at the end of Section 3, is essential to achieve uniformity in Theorem \ref{fnvolume} in the parameter $n$.

\begin{remark}

The constant $C(U)$ in \emph{Theorem \ref{fries}} is invariant under scaling: $C(\gamma U)=C(U)$ for all $\gamma>0$. Indeed, setting $v_\gamma(\gamma x)=\gamma v(x)$ for $x\in U$, we have $\nabla v_\gamma(\gamma x)=\nabla v(x)$ and hence $||\nabla v_\gamma-R||_{L^2(\gamma U)}=\gamma^{d/2}||\nabla v-R||_{L^2(U)}$ and $||\textnormal{dist}(\nabla v_\gamma, \textnormal{SO}(d))||_{L^2(\gamma U)}=\gamma^{d/2}||\textnormal{dist}(\nabla v, \textnormal{SO}(d))||_{L^2(U)}$. This implies that for the domains $U_n$ $(n\geq 1)$, the corresponding constant $C(U_n)$ can be chosen independently of $n$.
\label{remark}
\end{remark}

\subsection{Proofs}

We are going to show that the $L^2$-distance of the Jacobian $\nabla\hat\omega$ from the scaled identity matrix on $U_n$ can be controlled by the difference of the areas of $\hat\omega(U_n)$ and $U_n$. Because of the periodic boundary conditions, $\lambda(\hat\omega(U_n))$ does not depend on configurations $\omega$ with $(\Omega 2)$, thus it provides a suitable uniform control on the set $\Omega_{n,l}$. Then we show that the expected square distance of $\nabla\hat\omega$ from the scaled identity matrix can be controlled by the the expected square deviation of the polyhedra's edge lengths from one. The one should be associated with the lattice constant of the unscaled lattice.

The following two lemmas from \cite{LPS16} provide the desired rigidity estimate on tetrahedra and octahedra. They state that the distance from $\textrm{SO}(3)$ of a piecewise affine linear map defined on the polyhedron can be controlled by terms that measure how the map deforms the edge lengths of the polyhedron. We conjecture that any convex, rigid polyhedron satisfies such rigidity estimates via Dehn's theorem and the Inverse function theorem. However in this paper, as its main concern is not rigidity theory, we will only consider tetrahedra and octahedra for which these estimates are already proven. Let $|M|=\sqrt{\textnormal{tr}(M^tM)}$ denote the Frobenius norm of a matrix $M\in \RRR^{3\times 3}$ and $|w|$ the Euclidean norm of $w\in\RRR^3$.
\begin{lem}[\cite{LPS16} Lemma 3.2.]
There is a positive constant $C_1$ such that, for all linear maps  $A: \RRR^3 \rightarrow \RRR^3$ with $\textnormal{det}(A)>0$ and $w_1=(1, 0, 0)$, $w_2=(\frac{1}{2}, \frac{\sqrt3}{2}, 0)$, $w_3=w_2-w_1$, $w_4=(\frac{1}{2}, \frac{\sqrt3}{6}, \frac{\sqrt6}{3})$, $w_5=w_4-w_2$, $w_6=w_4-w_1$ and $l\geq1$, the following inequality holds:
\begin{equation}
\textnormal{dist}^2\left(A\ ,\ \textnormal{SO}(3)\right):=\inf_{R\in \textnormal{SO}(3)} \left|A-R\right|^2\leq C_1 \sum_{i=1}^6 (|Aw_i|-1)^2.
\label{triangleeq}
\end{equation}
\label{triangle}
\end{lem}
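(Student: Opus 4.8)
The plan is to prove the rigidity estimate \eqref{triangleeq} for the reference tetrahedron with vertices $0, w_1, w_2, w_4$ (whose six edge vectors are precisely the $w_i$) by a compactness-plus-local-expansion argument. First I would fix the setup: a linear map $A$ with $\det(A)>0$ acts on the tetrahedron, and the six quantities $(|Aw_i|-1)^2$ measure how far $A$ stretches each edge away from its unit length. The key observation is that $A\in\textnormal{SO}(3)$ precisely when $A$ preserves all six edge lengths (since $\det(A)>0$ rules out reflections, and the six edges of a rigid tetrahedron determine its congruence class by Cauchy's theorem / rigidity of the simplex). So the right-hand side vanishes exactly on $\textnormal{SO}(3)$, which is also where the left-hand side vanishes; the content of the lemma is that this agreement is quantitative with a uniform constant.

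The main step is a two-scale argument. \emph{Near} $\textnormal{SO}(3)$, I would linearize: writing $A=R(\textnormal{Id}+\epsilon H)$ for $R\in\textnormal{SO}(3)$ and small symmetric-part perturbation, one checks that to leading order $|Aw_i|-1\approx \langle H w_i, w_i\rangle$ (the antisymmetric part of $H$ drops out), and that the symmetric matrix $H$ is recovered linearly from the six numbers $\langle H w_i,w_i\rangle$ because the six rank-one matrices $w_i\otimes w_i$ span the $6$-dimensional space of symmetric $3\times3$ matrices — this is the crucial nondegeneracy fact, and it is exactly where the specific geometry of the regular tetrahedron (its edge directions being in general position) enters. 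This spanning property gives a local estimate $\textnormal{dist}^2(A,\textnormal{SO}(3))\leq C\sum_i(|Aw_i|-1)^2$ in a neighborhood of $\textnormal{SO}(3)$. \emph{Away} from $\textnormal{SO}(3)$, I would argue by compactness: on the region where $\textnormal{dist}(A,\textnormal{SO}(3))\geq\eta$ for fixed $\eta>0$, the ratio of the two sides is a continuous, strictly positive function; one only needs to control it as $|A|\to\infty$, where the right-hand side grows like $|A|^2$ (since some $|Aw_i|$ must become large) and dominates the left-hand side, so the supremum of the ratio is finite. Combining the two regimes yields the uniform constant $C_1$.

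The hard part will be making the compactness argument uniform at infinity and gluing it cleanly to the local linearization: $\textnormal{SO}(3)$ is non-compact as a submanifold of all matrices only in the sense that one must mod out the rotational ambiguity, and one must ensure the constant does not blow up either as $A$ degenerates toward $\det(A)\to 0^+$ from inside the admissible region or as $|A|\to\infty$. A cleaner route, which I expect the authors of \cite{LPS16} take, is to quote the Friesecke--James--M\"uller geometric rigidity (Theorem \ref{fries}) directly on the single fixed tetrahedron together with the observation that $\textnormal{dist}(\nabla\hat\omega,\textnormal{SO}(3))$ for the affine extension is comparable to the edge-length defects; since the tetrahedron here is a \emph{fixed} domain, no $n$-uniformity is needed at this stage and a single constant $C_1$ suffices. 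Either way, the essential input is the span of $\{w_i\otimes w_i\}$ in the symmetric matrices, and the rest is a standard quantitative-rigidity packaging; I would therefore spend most of the effort verifying that nondegeneracy and then invoking compactness for the global bound.
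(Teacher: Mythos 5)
The paper contains no proof of Lemma \ref{triangle}: it is imported verbatim from \cite{LPS16} (Lemma 3.2 there) and used as a black box, so there is nothing internal to compare against. Judged on its own, your main line of argument is sound and isolates the correct key fact: the six rank-one matrices $w_i\otimes w_i$ form a basis of the six-dimensional space of symmetric $3\times 3$ matrices (equivalently, the Gram matrix $A^tA$ is a linear function of the six numbers $|Aw_i|^2$), and this is exactly where the geometry of the regular tetrahedron enters; one checks it by the short computation that $w_i^t Q w_i=0$ for all $i$ forces the symmetric matrix $Q$ to vanish. The two-regime structure (linearize near $\textnormal{SO}(3)$, compactness plus growth at infinity elsewhere) then closes the argument, and your worry about $\det A\to 0^+$ is vacuous: $|Aw_i|=1$ for all $i$ forces $A^tA=\textnormal{Id}$, so the right-hand side of \eqref{triangleeq} is bounded away from zero on any compact set avoiding $\textnormal{O}(3)$.

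Two caveats. First, your proposed ``cleaner route'' via Theorem \ref{fries} is circular: for a single linear map the Friesecke--James--M\"uller inequality degenerates to $|A-R|\le C\,\textnormal{dist}(A,\textnormal{SO}(3))$, which is trivially true and contains no information about edge lengths; the comparability of $\textnormal{dist}(A,\textnormal{SO}(3))$ with the edge-length defects \emph{is} the content of the lemma and cannot be outsourced to Theorem \ref{fries}. (In the paper the logical order is the reverse: Lemma \ref{triangle} supplies the pointwise integrand that Theorem \ref{fries} is then applied to.) Second, the expansion $A=R(\textnormal{Id}+\epsilon H)$ is delicate when $\textnormal{sym}(H)$ is much smaller than the antisymmetric part, since both sides of \eqref{triangleeq} then degenerate at order $\epsilon^4$ and the uniformity of the error terms must be tracked. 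A cleaner packaging of your own idea avoids linearization entirely: for $\det A>0$ the polar decomposition gives $\textnormal{dist}(A,\textnormal{SO}(3))=|\sqrt{A^tA}-\textnormal{Id}|\le |A^tA-\textnormal{Id}|$, the spanning property gives $|A^tA-\textnormal{Id}|^2\le C\sum_i(|Aw_i|^2-1)^2$, and on the set where all $|Aw_i|\le 2$ this is at most $9C\sum_i(|Aw_i|-1)^2$; if instead some $|Aw_i|>2$, then since $w_1,w_2,w_4$ form a basis the right-hand side of \eqref{triangleeq} already dominates a multiple of $|A|^2$ and hence the left-hand side. With that substitution your outline becomes a complete and correct proof.
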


A similar theorem holds for octahedra. Let $\cO$ denote an octahedron with vertices $P_i$, $i\in\{1,\dots,6\}$, and edges $P_iP_j$ for $i\not= j$ (mod 3).

\begin{lem}[\cite{LPS16} Lemma 3.4.]
There is a constant $C_2>0$ such that
\begin{equation}
\textnormal{dist}^2\left(\nabla u\ ,\ \textnormal{SO}(3)\right)\leq C_2 \sum_{i\not= j \textrm{ (mod 3)}} (|u(P_iP_j)|-1)^2 \quad \textrm{almost everywhere in } \cO,
\label{triangleeq2}
\end{equation}

for every $u \in \cC^0(\cO; \RRR^3)$ such that $u$ is piecewise affine with respect to the triangulation determined by cutting $\cO$ along the diagonal $P_1P_4$, $\det (\nabla u) > 0$ a.e. in $\cO$, and $u(\cO)$ is convex.
\label{octahedron}
\end{lem}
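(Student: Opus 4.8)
The plan is to use the triangulation of $\cO$ determined by the diagonal $P_1P_4$: joining $P_1P_4$ to each of the four equatorial edges (the edges of $\cO$ joining two of $P_2,P_3,P_5,P_6$) yields four tetrahedra $T_1,\dots,T_4$ filling $\cO$, on each of which $u$ is affine with constant gradient $A_k:=\nabla u|_{T_k}$. Since the almost-everywhere statement amounts to bounding $\textnormal{dist}^2(A_k,\textnormal{SO}(3))$ for each $k$, it suffices to produce the estimate tetrahedron by tetrahedron. A useful preliminary observation is a compatibility constraint: writing $d=P_1-P_4$ for the reference diagonal vector, continuity of $u$ forces $A_k d=u(P_1)-u(P_4)$ to be the \emph{same} vector for all four $k$, so in particular $|A_k d|=|u(P_1P_4)|$ is one common number.

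First I would record a rigidity estimate for each individual tetrahedron $T_k$. Although $T_k$ is not regular --- it carries five edges of reference length $1$ together with the diagonal $P_1P_4$ of reference length $\sqrt2$ --- it is a fixed non-degenerate simplex, so the argument proving Lemma \ref{triangle} applies verbatim and yields a constant $C$ with
\begin{equation*}
\textnormal{dist}^2(A_k,\textnormal{SO}(3))\leq C\Big[\sum_{e}\big(|u(e)|-1\big)^2+\big(|u(P_1P_4)|-\sqrt2\big)^2\Big],
\end{equation*}
the inner sum running over the five unit edges of $T_k$. Summing over $k$, each of the twelve edges $P_iP_j$ of $\cO$ is picked up a bounded number of times, so the right-hand side is controlled by $\sum_{i\neq j\,(\mathrm{mod}\,3)}(|u(P_iP_j)|-1)^2$ \emph{plus} a fixed multiple of the single diagonal term $(|u(P_1P_4)|-\sqrt2)^2$.

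The heart of the matter is therefore to absorb this diagonal term, i.e.\ to show
\begin{equation*}
\big(|u(P_1P_4)|-\sqrt2\big)^2\leq C\sum_{i\neq j\,(\mathrm{mod}\,3)}\big(|u(P_iP_j)|-1\big)^2,
\end{equation*}
and this is exactly where the hypotheses $\det(\nabla u)>0$ and convexity of $u(\cO)$ enter. The diagonal $P_1P_4$ (indeed all three main diagonals $P_1P_4,P_2P_5,P_3P_6$) is absent from the right-hand sum, so it must be recovered from the twelve edge lengths alone. This is possible because the octahedron, viewed as a bar--joint framework on six vertices with its twelve edges, is isostatic and infinitesimally rigid (Dehn's theorem, its eight faces being triangular): the map sending a configuration of the six image vertices, modulo rigid motions, to the vector of twelve edge lengths has invertible differential at the regular configuration. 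By the inverse function theorem this map is a local diffeomorphism with Lipschitz inverse near the regular octahedron, so the diagonal length is a smooth function of the edge lengths there and its deviation from $\sqrt2$ is controlled linearly by the edge deviations; squaring and applying Cauchy--Schwarz gives the displayed bound for edge lengths close to $1$, which is the regime relevant to $(\Omega1)$. The orientation condition $\det(\nabla u)>0$ together with convexity of $u(\cO)$ selects this regular branch and excludes the competing isometric but folded or self-intersecting (Bricard-type) configurations, which share the same twelve edge lengths yet realise different diagonals; a compactness argument over admissible configurations with normalised edge deviations then extends the estimate beyond the small-deviation regime.

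The step I expect to be the genuine obstacle is precisely this diagonal control. It cannot be obtained face by face: the equatorial quadrilateral $u(P_2)u(P_3)u(P_5)u(P_6)$ is a four-bar linkage and is flexible, so its sides do not determine its diagonals, and no single tetrahedron of the triangulation exposes more than the one diagonal $P_1P_4$. Only the \emph{global} rigidity of the whole octahedron, in conjunction with convexity and positive orientation to pin down the correct branch, forces the diagonal --- and hence $\nabla u$ --- to be controlled by the twelve physical edge lengths. This is the essential difference between the octahedron and the already-rigid single tetrahedron of Lemma \ref{triangle}.
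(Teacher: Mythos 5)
First, a point of comparison: the paper does not prove this lemma at all --- it is imported verbatim as \cite[Lemma 3.4]{LPS16} and used as a black box, so there is no in-paper proof to measure your attempt against. Judged on its own terms, your architecture is the natural one and is consistent with the cited source: cut $\cO$ along $P_1P_4$ into the four tetrahedra $P_1P_4P_iP_j$, apply a per-tetrahedron rigidity estimate in the spirit of Lemma \ref{triangle} (the adaptation from the regular tetrahedron to this one is routine, since the six edge directions still determine the Gram matrix $A^tA$, though ``applies verbatim'' is an overstatement --- the constant and the linear-algebra step must be redone for the new edge vectors), and reduce everything to controlling the single reference-length-$\sqrt2$ diagonal $|u(P_1P_4)|$ by the twelve physical edge lengths. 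You also correctly identify that this diagonal bound is where $\det(\nabla u)>0$ and convexity of $u(\cO)$ must enter, and why no single tetrahedron or flexible equatorial quadrilateral can supply it.

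The genuine gap is that this diagonal bound is asserted rather than proved. Infinitesimal rigidity (Dehn) plus the inverse function theorem yields $\bigl(|u(P_1P_4)|-\sqrt2\bigr)^2\leq C\sum_{i\neq j\,(\textrm{mod}\,3)}(|u(P_iP_j)|-1)^2$ only for image configurations lying in a configuration-space neighborhood of the regular octahedron; your hypotheses constrain only the twelve edge lengths together with convexity and orientation, so you must first show that every convex, positively oriented realization with near-unit edges actually lies in that neighborhood. That is a quantitative stability version of Cauchy's rigidity theorem, and saying that convexity ``selects the regular branch'' names the desired conclusion rather than deriving it. The compactness argument you invoke does not close this hole, because the admissible class $\{\det(\nabla u)>0,\ u(\cO)\ \textrm{convex}\}$ is not closed: its closure contains degenerate configurations --- for instance $u(P_1)=u(P_4)$ with the four equatorial image points forming a unit-sided quadrilateral on the unit sphere about that common point --- which carry all twelve edge lengths equal to $1$ while $\det(\nabla u)=0$ on every tetrahedron, hence $\textnormal{dist}(\nabla u,\textnormal{SO}(3))$ bounded below. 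Cauchy's theorem says nothing about such flat limits, so the limit point produced by your normalised sequence need not be the regular octahedron and the intended contradiction does not materialize; showing that these degenerate configurations are not limits of admissible ones with edge lengths tending to $1$ is precisely the missing work. (Two smaller omissions: the lemma imposes no smallness or upper bound on the edge deviations, so the large-deviation regime needs the separate observation that $|\nabla u|$ on each tetrahedron is controlled by the edge lengths, the diagonal being dominated by two edges via the triangle inequality; and a concrete mechanism for the diagonal, such as the identity $|u(P_1P_4)|^2+|u(P_2P_5)|^2=\sum_{\textrm{sides}}|u(\cdot)|^2-|u(P_1)-u(P_2)+u(P_4)-u(P_5)|^2$ for the quadrilateral $P_1P_2P_4P_5$ combined with a convexity-based lower bound on the diagonals, would have to replace the abstract appeal to rigidity theory.)
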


Now, we prove the mentioned estimate, which provides control over the $L^2$-distance of $\nabla\hat\omega$ from the scaled identity matrix in terms of the edge length deviations.

\begin{lem}
For a polyhedron $\triangle\in\cT$, let $\cE(\triangle)$ denote the set of edges of $\triangle$. There is a constant $c>0$ such that for all $n\geq 1$ and $1<l<1+\gra$, the inequality

\begin{equation}
||\ \nabla\hat\omega-l\ \textnormal{Id}\ ||^2_{L^2(U_n)}\leq c \sum_{\triangle\in\cT_n} \sum_{\{x, y\}\in {\cE(\triangle)}} (|\omega(x)-\omega(y)|-1)^2
\label{sideeq}
\end{equation}

holds for all $\omega\in\Omega_{n, l}$, and hence

\begin{equation}
E_{P_{n,l}}[\ ||\ \nabla\hat\omega-l\ \textnormal{Id}\ ||^2_{L^2(U_n)}\ ]\leq c \sum_{\triangle\in\cT_n} \sum_{\{x, y\}\in {\cE(\triangle)}} E_{P_{n,l}}[\ (|\omega(x)-\omega(y)|-1)^2\ ]
\label{lemmaeq2}
\end{equation}

where the $L^2$-norm is defined with respect to the scalar product on $\RRR^{3\times 3}$ that induces the Frobenius norm, and $|\cdot|$ denotes the Euclidean norm on $\RRR^3$.
\label{side}
\end{lem}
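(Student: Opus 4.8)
The plan is to bridge the purely local rigidity estimates of Lemmas \ref{triangle} and \ref{octahedron} with the global Friesecke--James--M\"uller theorem (Theorem \ref{fries}), and then to exploit the periodic boundary condition \eqref{periodic} in order to trade the abstract rotation produced by Theorem \ref{fries} for the explicit matrix $l\,\textnormal{Id}$. It suffices to prove \eqref{sideeq} for a fixed $\omega\in\Omega_{n,l}$; the bound \eqref{lemmaeq2} then follows by integrating \eqref{sideeq} against $P_{n,l}$ and using linearity of the expectation.

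First I would produce a pointwise control of $\textnormal{dist}(\nabla\hat\omega,\textnormal{SO}(3))$ cell by cell. On a cell $\triangle\in\cT_n$ the gradient $\nabla\hat\omega$ is constant on each tetrahedron of the fixed triangulation, has positive determinant by $(\Omega3)$, and has convex image by $(\Omega4)$. For a regular tetrahedral cell I would rotate it onto the standard tetrahedron of Lemma \ref{triangle} by some $Q\in\textnormal{SO}(3)$; because both $\textnormal{dist}(\cdot,\textnormal{SO}(3))$ and the quantities $|A w_i|$ are invariant under replacing $A$ by $AQ$, and because $|\nabla\hat\omega\, Q w_i|$ is exactly the length $|\omega(x)-\omega(y)|$ of the image of the edge $\{x,y\}$ matched to $w_i$, Lemma \ref{triangle} gives $\textnormal{dist}^2(\nabla\hat\omega,\textnormal{SO}(3))\leq C_1\sum_{\{x,y\}\in\cE(\triangle)}(|\omega(x)-\omega(y)|-1)^2$ on that cell. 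For an octahedral cell the same identification together with Lemma \ref{octahedron} gives the analogous bound almost everywhere with constant $C_2$, involving only the twelve genuine edges of the octahedron, so that the triangulating diagonal never enters. Since $\nabla\hat\omega$ is piecewise constant and every cell has bounded volume, integrating these bounds over $U_n$ and summing over $\cT_n$ yields
\begin{equation*}
||\,\textnormal{dist}(\nabla\hat\omega,\textnormal{SO}(3))\,||^2_{L^2(U_n)}\leq C'\sum_{\triangle\in\cT_n}\sum_{\{x,y\}\in\cE(\triangle)}(|\omega(x)-\omega(y)|-1)^2,
\end{equation*}
with $C'$ depending only on $C_1$, $C_2$ and the two cell volumes, hence uniform in $n$ and $l$.

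Next I would apply Theorem \ref{fries} to $v=\hat\omega|_{U_n}\in W^{1,2}(U_n,\RRR^3)$ on the Lipschitz domain $U_n$, obtaining a rotation $R\in\textnormal{SO}(3)$ with $||\nabla\hat\omega-R||_{L^2(U_n)}\leq C(U_n)\,||\textnormal{dist}(\nabla\hat\omega,\textnormal{SO}(3))||_{L^2(U_n)}$, where $C(U_n)$ is uniform in $n$ by Remark \ref{remark}. It then remains to replace $R$ by $l\,\textnormal{Id}$. For this I would note that $\psi(x):=\hat\omega(x)-lx$ is $nT$-periodic by \eqref{periodic}, so $\int_{U_n}\nabla\psi\,dx=0$ and the mean of $\nabla\hat\omega$ over $U_n$ equals exactly $l\,\textnormal{Id}$. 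Since the mean is the $L^2$-best constant approximation, comparing against the constant $R$ gives $||\nabla\hat\omega-l\,\textnormal{Id}||_{L^2(U_n)}\leq||\nabla\hat\omega-R||_{L^2(U_n)}$. Chaining the three estimates proves \eqref{sideeq} with $c=C'\,C(U_n)^2$.

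The step I expect to be the main obstacle is precisely this last replacement: Theorem \ref{fries} yields no information about $R$ itself, so $|R-l\,\textnormal{Id}|$ cannot be bounded head-on. The periodicity identity $\int_{U_n}\nabla\hat\omega\,dx=l\,\textnormal{Id}\cdot\lambda(U_n)$ circumvents this by identifying $l\,\textnormal{Id}$ as the orthogonal projection of $\nabla\hat\omega$ onto the constant matrices, which makes $l\,\textnormal{Id}$ automatically no farther from $\nabla\hat\omega$ than $R$ is and thereby removes the need to estimate $R$ at all.
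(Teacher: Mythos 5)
Your proposal is correct and follows essentially the same route as the paper: cell-by-cell application of Lemmas \ref{triangle} and \ref{octahedron}, the Friesecke--James--M\"uller estimate with the scale-invariant constant from Remark \ref{remark}, and finally the replacement of the unknown rotation $R$ by $l\,\textnormal{Id}$ via the periodicity of $\hat\omega - l\,\textnormal{Id}$. Your ``mean is the best constant $L^2$-approximation'' step is exactly the paper's Pythagoras/orthogonality argument in different words.
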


Note that the right side in equation (\ref{sideeq}) is strictly positive because of the boundary conditions (\ref{periodic}) and because $l>1$, whereas the left is zero for $\omega=\omega_l\in\Omega^{per}_{n,l}$. Since the measure $P_{n,l}$ is supported on the set $\Omega_{n, l}$, (\ref{lemmaeq2}) follows from (\ref{sideeq}). Also note that $c$ does not depend on $n$.

\begin{proof}

Let $\omega\in\Omega_{n, l}$ and $\cE(\triangle)$ be the set of edges of a polyhedron $\triangle\in\cT_n$. By Lemma \ref{triangle} and Lemma \ref{octahedron} we conclude that on every polyhedron $\triangle\in\cT_n$, we have 

\begin{equation*}
\textrm{dist}^2\left(\nabla\hat\omega|_\triangle ,\ \textrm{SO}(3)\right) \leq \max \{C_1, C_2\} \sum_{\{x, y\}\in {\cE(\triangle)}} (|\omega(x)-\omega(y)|-1)^2
\end{equation*}

where we used $(\Omega 1)$, $(\Omega 3)$ and $(\Omega 4)$ to apply lemmas \ref{triangle} and \ref{octahedron} and with the constants $C_1, C_2$ from lemmas \ref{triangle} and \ref{octahedron}. Orthogonality of functions which are non-zero only on disjoint polyhedra gives

\[
||\ \textrm{dist}(\nabla\hat\omega, \textrm{SO}(3))\ ||^2_{L^2(U_n)}\leq C \sum_{\triangle\in\cT_n} \sum_{\{x, y\}\in {\cE(\triangle)}}(|\omega(x)-\omega(y)|-1)^2
\]

with constant $C=\max\{ C_1,\ C_2\} \max\{\sqrt 2 /12, \sqrt 2 /3\}$ where the second factor is the maximum the volume of a regular tetrahedron and octahedron. Applying Theorem \ref{fries} about geometric rigidity, we find an $R(\omega)\in\textrm{SO}(3)$ such that

\[
||\ \nabla\hat\omega-R(\omega)\ ||^2_{L^2(U_n)}\leq K \ ||\ \textrm{dist}( \nabla\hat\omega, \textrm{SO}(3))\ ||^2_{L^2(U_n)},
\]

with a constant $K>0$ that does not depend on $n$ by Remark \ref{remark}. Due to the periodic boundary conditions (\ref{periodic}), the function $\hat\omega-l\ \textrm{Id}$ is $n$-periodic in the directions $t_1, t_2, t_3$, this is to say

\begin{equation}
\hat\omega(x+nt_i)-l (x+nt_i)=\hat\omega(x)-lx \quad \textrm{for all } x\in \RRR^3 \textrm{ and } \ i \in \{1,2,3\}.
\label{Nperiodic}
\end{equation}

By the fundamental theorem of calculus, the gradient of a periodic function is orthogonal to any constant function, and therefore

\[
||\ \nabla\hat\omega-l\ \textrm{Id}\ ||^2_{L^2(U_n)}+||\ l\ \textrm{Id}-R(\omega)\ ||^2_{L^2(U_n)}=||\ \nabla\hat\omega-R(\omega)\ ||^2_{L^2(U_n)}
\]

by Pythagoras. Since $P_{n,l}$ is supported on the set $\Omega_{n,l}$, the lemma is established with $c=C K$.
\end{proof}

With Lemma \ref{side} we can now prove Theorem \ref{thm1}.

\begin{proof}[Proof of Theorem \ref{thm1}]

A generalization of Heron's formula for tetrahedra gives the volume $\lambda(\triangle)$ of the tetrahedron $\triangle$ with edge lengths $u, v, w, U, V, W$ (opposite edges denoted with the same letter, lower case and capital)

\begin{equation}
\lambda(\triangle)=\frac{\sqrt{(-a+b+c+d)(a-b+c+d)(a+b-c+d)(a+b+c-d)}}{192\ u v w}
\label{heron}
\end{equation}
with
\begin{align*}
   X &= (w - U + v) (U + v + w) \qquad a = \sqrt{x Y Z}\\ 
 x &= (U - v + w) (v - w + U) \qquad b = \sqrt{y Z X}\\
 Y& = (u - V + w) (V + w + u) \qquad c = \sqrt{z X Y}\\
 y &= (V - w + u) (w - u + V) \qquad d =\sqrt{x y z}\\
y &= (V - w + u) (w - u + V)\\
Z &= (v - W + u) (W + u + v)\\
z &= (W - u + v) (u - v + W).\\
\end{align*}
By first order Taylor approximation of (\ref{heron}) at the regular tetrahedron $\triangle_1$, denoting the edge lengths $a_i$, $i\in\{1,\dots,6\}$ we obtain

\begin{equation*}
\lambda(\triangle)-\lambda({\triangle_1})=\frac{1}{12\sqrt{2}}\sum_{i=1}^6(a_i-1)+o\left(\sum_{i=1}^6 |a_i-1|\right)\quad \textrm{as}\ a_i\to1 \textrm{ for all } i.
\end{equation*}

For the octahedron, we obtain $\frac{1}{6\sqrt{2}}$ for the volume derivative in one edge $b_1$ at $b_1=1$ and the remaining $11$ edges fixed at $b_i=1$. This can be achieved by dividing the octahedron into 4 tetrahedrons that all have a common edge $d$ that is a diagonal of the octahedron adjacent to $x$. Using the formula (\ref{heron}) and some elementary geometry of a regular trapezoid to see that $d=\sqrt{x+1}$, we obtain with the regular octahedron $\octagon_1$ of edge length 1:

\begin{equation*}
\lambda(\octagon)-\lambda({\octagon_1})=\frac{1}{6\sqrt{2}}\sum_{i=1}^{12}(b_i-1)+o\left(\sum_{i=1}^{12} |b_i-1|\right)\quad \textrm{as}\ b_i\to1 \textrm{ for all } i.
\end{equation*}

We only need that the partial derivatives of the volume at $\triangle_1$ and $\octagon_1$ are positive. By continuity, in a small neighborhood of the regular polyhedra, increasing one edge length, increases the volume. Therefore we can choose $\gra>0$ from the definition of allowed configurations so small such that the polyhedra of the tessellation obtain minimal volume as the edge lengths go to 1. We choose $c_1>12\sqrt{2}$ and a corresponding $\gra>0$ so small that the inequalities
\begin{align}
\sum_{i=1}^6(a_i-1) \leq c_1 (\lambda(\triangle)-\lambda({\triangle_1})) \nonumber \\
\sum_{i=1}^{12}(b_i-1) \leq c_1 (\lambda(\octagon)-\lambda({\octagon_1}))
\label{area1}
\end{align}

are satisfied whenever $1<a_i<1+\gra$ and $1<b_i<1+\gra$. Let us fix such $c_1>0$ and $\gra>0$ and assume that $\Omega^{per}_{n,l}$ is defined by means of this $\gra.$ Using (\ref{area1}) we can also estimate the squared edge length deviations:

\begin{align}
\sum_{i=1}^6(a_i-1)^2 \leq c_1\ \gra\ (\lambda(\triangle)-\lambda({\triangle_{1}}))\nonumber \\
\sum_{i=1}^{12}(b_i-1)^2 \leq c_1\ \gra\ (\lambda(\octagon)-\lambda({\octagon_1}))
\label{area}
\end{align}

By equation (\ref{sideeq}) from Lemma \ref{side} and ($\ref{area}$), we get an upper bound on $||\nabla\hat\omega-l\ \textnormal{Id}||_{L^2(U_n)}^2$ in terms of the area differences. By summing up the contributions ($\ref{area}$) of the polyhedra $\triangle\in\cT_n$, we conclude for all $\omega\in\Omega_{n,l}$ that

\begin{equation}
||\ \nabla\hat\omega-l\ \textnormal{Id}\ ||_{L^2(U_n)}^2 \leq c_1 \ \gra\ c \sum_{\triangle\in\cT_n}(\lambda(\hat\omega(\triangle))-\lambda({\triangle})).
\label{maxarea}
\end{equation}

As a consequence of $(\Omega 2)$ and the periodic boundary conditions ($\ref{periodic}$), the right hand side in ($\ref{maxarea}$) does not depend on $\omega\in\Omega_{n,l}$. Hence, with $\omega_l\in\Omega_{n,l}$ we can compute

\begin{equation}
\sum_{\triangle\in\cT_n}(\lambda(\hat\omega(\triangle))-\lambda({\triangle}))=\sum_{\triangle\in\cT_n}(\lambda(\hat\omega_l(\triangle))-\lambda({\triangle}))=|U_n|(l^3-1).
\label{maxarea1}
\end{equation}

The combination of the equations (\ref{maxarea}) and (\ref{maxarea1}) gives

\begin{equation}
||\ \nabla\hat\omega-l\ \textnormal{Id}\ ||_{L^2(U_n)}^2 \leq c_1 \ \gra\ c\ |U_n|\ (l^3-1).
\label{maxarea2}
\end{equation}

The reference measure $\delta_0\otimes\lambda^{I_n\setminus\{0\}}$ and the set of allowed configurations $\Omega_{n,l}$ are invariant under under the translations
\[
\psi_b: \Omega^{\textrm{per}}_{n,l} \to \Omega^{\textrm{per}}_{n,l} \quad (\omega(x))_{x\in I}  \mapsto (\omega(x + b) - \omega(b))_{x\in I}
\]
for $b\in T$. As a consequence the matrix valued random variables $\nabla(\hat\omega(\triangle))$ are identically distributed for $\triangle, \widetilde\triangle\in\textrm{triang}(\cT_n)$ such that $\triangle = \widetilde\triangle$ (mod $T$). Thus for any $\triangle\in \textrm{triang}(\cT_1)$ the random variables $\nabla(\hat\omega(\triangle+t))_{t\in T}$ are identically distributed. Therefore
\begin{align*}
E_{P_{n,l}}&[\ ||\ \nabla\hat\omega-l\ \textnormal{Id}\ ||_{L^2(U_n)}^2 \ ] =\sum_{\triangle\in \textrm{triang}(\cT_1)} |U_n(\triangle)| \ E_{P_{n,l}}[\ |\nabla\hat\omega(\triangle)-l\ \textnormal{Id}|^2 \ ] 
\end{align*}
with the regions $U_n(\triangle)$ of $U_n$ taken up by $T$-translates of $\triangle$. Since the proportions $|U_n(\triangle)|/|U_n|$ are independent of $n$ for any $\triangle\in \textrm{triang}(\cT_1)$, this equation together with (\ref{maxarea2}), implies

\begin{equation*}
\lim_{l\downarrow1}\sup_{n\in\NNN}\sup_{\triangle\in\textrm{triang}(\cT_n)} E_{P_{n,l}}[\ |\nabla\hat\omega(\triangle)-l\ \textnormal{Id}|^2 \ ]=0.
\end{equation*}

By means of the triangle inequality, we see that for all $\triangle\in\textrm{triang}(\cT_n)$ and $\omega\in\Omega_{n,l}$

\begin{equation*}
|\nabla\hat\omega(\triangle)- \textnormal{Id}|^2\leq |\nabla\hat\omega(\triangle)-l\ \textnormal{Id}|^2+c_2^2(l-1)^2+2 c_2\ |l-1|\ |\nabla\hat\omega(\triangle)-l\ \textnormal{Id}|
\end{equation*}

with $c_2=|\mathrm{Id}|>0$. For $\omega\in\Omega_{n,l}$, the term $|\nabla\hat\omega(\triangle)-l\ \textnormal{Id}|$ is uniformly bounded for $l\in (1, \gra)$ and $n\in\NNN$, which proves the theorem.
\end{proof}

\section{Two-dimensional model with local geometry dependent interactions}

In this section, we extend the result of \cite{AG14} about long-range orientational order in that we get rid of the a priori enumeration of two-dimensional hard disk configurations by an underlying triangular lattice and merely impose local geometry dependent conditions by means of a Hamiltonian $H$. The conditions impose that hard disks have exactly six neighbors that are not too far away. We show that long-range orientational order carries over to infinite volume Gibbsian point process defined by $H$.
\subsection{Definitions}
Let us cite some definitions from \cite{DDG12}. We equip the plane $\RRR^2$ with its Borel $\sigma$-algebra $\cB(\RRR^2)$ and by $\lambda$ we denote the Lebesgue measure on $(\RRR^2, \cB(\RRR^2))$. The characters $\Lambda$ and $\Delta$ will always denote measurable regions in $\RRR^2$ and the notation $\Delta\Subset\RRR^2$ means that in addition $\Delta$ is bounded. Consider the set $\cX\subset 2^{\left(\RRR^2\right)}$ of locally finite point configurations in $\RRR^2$. That means $X\in \cX$ is a subset $X\subset \RRR^2$ and for any $\Delta\Subset\RRR^2$, the intersection $X_\Delta:=\textrm{pr}_\grD(X):=X\cap \Delta$ has finite cardinality $|X_\Delta|<\infty$. The counting variables $N_\Delta(X):=|X_\Delta|$ generate a $\sigma$-algebra $\cA:=\sigma(N_\Delta : \Delta\Subset\RRR^2 )$ on $\cX$. The union of $X, Y\in \cX$ will be denoted by $XY$, this will be used when defining the configuration $X_\Lambda Y_{\Lambda^c}$ that agrees with $X$ on $\Lambda$ and with $Y$ on the complement of $\Lambda$. In a sequence of set operation, unions $XY$ are to evaluate first in order to reduce brackets. On the measurable space $(\cX, \cA)$, we consider the Poisson point process $\Pi^z$ with intensity $z>0$. The measure $\Pi^z$ is uniquely characterized by the properties that that for all $\Delta\Subset\RRR^2$ under $\Pi^z$: (i) $N_\Delta$ is Poisson distributed with parameter $z\lambda(\Delta)$, and (ii) conditional on $N_\Delta=n$, the $n$ points in $\Delta$ are independently and uniformly distributed on $\Delta$ for each integer $n\geq 1$. Similarly, configurations $\cX_\Lambda=\{X_\Lambda: X\in \cX\}$ in the set $\Lambda$ carry the trace $\sigma$-algebra $\cA_\Lambda':=\cA|_{\cX_\Lambda}$ and the reference measure $\Pi^z_\Lambda$ which is the law of $X_\Lambda$ if $X$ is distributed according to $\Pi^z$. We will also need the pullback of $\cA_\Lambda'$ to $\cX$ defined by $\cA_\grL:=\textrm{pr}_\grL^{-1} \cA_\Lambda'\subset \cA$. Finally, we define the shift group $\Theta=\{\theta_r: r\in \RRR^2\}$, where $\theta_r: \cX\to\cX$ is the translation  by $-r\in\RRR^2$, consequently $N_\Delta(\theta_r X)=N_{\Delta+r}( X)$ for all $\Delta \Subset \RRR^2$.

We fix $\alpha>0$ small enough, the size of $\alpha$ will be specified later. We change the notation of \cite{AG14} from $\epsilon$ to $\alpha$ at this point to emphasize that $\alpha$ is fixed and not particularly small. Let $\Lambda^{1+\alpha}:=\{x\in\RRR^2: \ |x-y|<1+\alpha \textrm{ for some } y\in\Lambda \}$ be the $(1+\gra)$-enlargement of $\Lambda$. For $X\in\cX$ we define the Hamiltonian $H_{\Lambda,Y}$ in $\Lambda$ with boundary condition $Y\in\cX$ by

\[
H_{\Lambda,Y}(X):=\begin{cases}
0 & \textrm{for all }x \in X_{\Lambda}Y_{\Lambda^{1+\alpha}\setminus \Lambda} \textrm{ and } y\in X_\Lambda Y_{\Lambda^c}: |x-y|>1\\
&\textrm{and for all } x \in X_{\Lambda}Y_{\Lambda^{1+\alpha}\setminus \Lambda}: \ | X_\Lambda Y_{\Lambda^c} \cap A_{1, 1+\alpha}(x) |=6 \\
\infty & \textrm{otherwise}.
\end{cases}
\]

This is to say that $H_{\Lambda,Y}(X)\in\{0,\infty\}$ takes the value 0 if and only if every point of $X_{\Lambda^{1+\alpha}}$ has distance greater than one from points in $X_\Lambda Y_{\Lambda^c}$ and has exactly six $X_\Lambda Y_{\Lambda^c}$-neighbors in the annulus $A_{1, 1+\alpha}(x)=\{y\in\RRR^2: |y-x|\in(1,1+\alpha)\}$, otherwise $H$ is defined to be infinity. Note that the only part of the boundary condition $Y$ relevant for $H_{\Lambda,Y}(X)$ is in the region $\Lambda^{2(1+\alpha)}\setminus \Lambda$.

\begin{defin} We define the partition function $Z^z_{\Lambda, Y}$ by

\[
Z^z_{\Lambda, Y}:=\Pi^z_\Lambda\{ X_\Lambda: H_{\Lambda,Y}(X_\Lambda)=0\}=\int e^{-H_{\Lambda,Y}(X)} \Pi^z_\Lambda(dX).
\]

We call a boundary condition $Y\in\cX$ admissible for the region $\Lambda \Subset \RRR^2$ if $0<Z^z_{\Lambda, Y}$. We write $\cX_*^{\Lambda, z}$ for the set of all these $Y$.
\end{defin}

The set of admissible boundary conditions $\cX_*^{\Lambda, z}$ is never empty as the $l\in(1,1+\alpha)$ multiply of a triangular lattice with lattice constant one is always in $\cX_*^{\Lambda, z}$. We note that $H_{\Lambda,Y}(\emptyset)=0$ for $Y_{\Lambda^{1+\alpha}}=\emptyset$ and also for specifically chosen $\Lambda$ and possibly non-empty $Y$. The partition function $Z^z_{\Lambda, Y}$ is zero, if neither $Y_{\Lambda^{1+\alpha}\setminus \Lambda}=\emptyset$ nor the boundary condition $Y_{\Lambda^{1+\alpha}\setminus \Lambda}$ can be extended to a near triangular lattice configuration in $\Lambda^{1+\alpha}$.

\begin{defin} For $Y\in \cX_*^{\Lambda, z}$, we define the Gibbs distribution in the region $\Lambda \Subset \RRR^2$ with boundary condition $Y$ by the formula

\[\gamma^z_{\Lambda}(F|Y)=\int_{\cX_\Lambda} \1_F(X Y_{\Lambda^c}) e^{-H_{\Lambda,Y}(X)} \Pi^z_\grL(\dd X)/Z^z_{\Lambda, Y},
\]
where $F\in\cA$. Note that $\gamma^z_{\Lambda}(\cdot|Y)$ is a measure on the whole space $(\cX, \cA)$.
\end{defin}
In case of $Y_{\grL^\gra\setminus \grL}\not=\emptyset$, the $\cX_\grL$-marginal of the measure $\gamma^z_{\Lambda}(\cdot|Y)$ is uniform on the configurations in $\cX_\grL$ that extended $Y_{\grL^\gra\setminus \grL}$ to a near triangular lattice configuration in $\Lambda^\alpha$. Otherwise if $Y_{\grL^\gra\setminus \grL}=\emptyset$, then $\gamma^z_{\Lambda}(\cdot|Y)=\delta_{Y_{\grL^c}}$. Note that $(F,Y)\in(\cA, \cX)\mapsto \gamma^z_{\Lambda}(F|Y)$ is a probability kernel from $(\cX, \cA_{\Lambda^c})$ to $(\cX, \cA)$, but the distribution $\gamma^z_{\Lambda}(\cdot|Y)$  has $\delta_{Y_{\grL^c}}$ as its marginal on $(\cX_{\grL^c}, \cA'_{\grL^c})$.

\begin{defin}[infinite-volume Gibbs measure]
A probability measure $P$ on $(\cX,\cA)$ is an an infinite-volume Gibbs measure for $z>0$ if $P(\cX_*^{\Lambda, z})=1$ and
\[
\int f \dd P = \int_{\cX_*^{\Lambda, z}} \frac{1}{Z^z_{\Lambda, Y}}\int_{\cX_\grL} f(XY_{\Lambda^c}) e^{-H_{\Lambda,Y}(X)} \Pi^z_\grL(\dd X) P(\dd Y)
\]
for every $\Lambda\Subset \RRR^2$ and every measurable $f: \cX \to [0,\infty)$. We denote the set of infinite-volume Gibbs measures by $\cG^z$.
\end{defin}

Note that the right hand side in the defining equality is equal to $\EEE_P[\gamma^z_\grL(f| \cdot)]$. Therefore, a measure $P$ is infinite volume Gibbs measure, if and only if $P\gamma^z_\grL=P$ for every $\grL\Subset \RRR^2$, where the product is understood as to take average with $P$ in the second variable of $\gamma^z_\grL$. We can easily see a degenerated measure $\grd_\emptyset\in\cG^z$, however we will be interested in more interesting Gibbs measures. In fact, as soon as $P(\emptyset)=0$ for a measure $P\in\cG^z$, we have that $P$ is supported on hard disk configurations with infinitely many disks.

The Hamiltonian $H$ implements an example of a $k$-nearest neighbor interaction as explained in \cite[Chapter 4.2.1]{DDG12}. Therefore by \cite[Lemma 5.1.]{DDG12}, the kernels $\grg_\grL^z$, $\grg_\grD^z$ for $\grL\subset \grD \Subset \RRR^2$ and $Y\in \cX_*^{\Lambda, z}$ satisfy the consistency conditions $\grg_\grL^z(\cX_*^{\grL, z} | Y)=1$ and $\grg_\grD^z\grg_\grL^z=\grg_\grD^z$, where the product is understood as product of probability kernels.

\subsection{Results}
We show the following generalization\footnote[1]{The wording of Theorem \ref{2Dthm} up to some minor modification in the definition of $H$ was suggested by Franz Merkl in a talk at a conference (Trends in Mathematical Crystallization) held at Warwick University in May 2016} of \cite[Thm. 4.1]{AG14}.

\begin{thm} Let $0<\alpha$ be small enough (such that Lemma \ref{enum} and Theorem \ref{2Dthmenum} hold true for the choice of this $\alpha$). Then for every $2/(\sqrt{3}(1+\gra)^2)<\rho<2/\sqrt{3}$ (the density of centers in the densest packing of disks with diameter 1), there is a measure $P_\rho\in \cap_{z>0} \cG^z$ such that
\begin{enum(i)}
\item \emph{Density = $\rho$}: For any $\grL\Subset \RRR^2$, we have $\EEE_{P_\rho}[N_\grL]=\rho \lambda(\grL)$.
\item \emph{Translational invariance}: The measure $P_\rho$ is translational invariant in any direction in $\RRR^2$, i.e. $P_\rho\circ\theta_r^{-1}=P_\rho$ for any $r\in\RRR^2$.
\item \emph{Long-range orientational order}: Let $x\in X$ be the point with the smallest distance from the origin. It is a.s. unique. We have $P_\rho(N_{A_{1,1+\alpha}}(x)=6)=1$. Choose a random neighbor $y\in  X$ of $x$
(i.e. $1 < |y - x| < 1 + \alpha$) uniformly distributed among all six neighbors. Then as $\rho\uparrow 2/ \sqrt{3}$, the law of $y-x$ w.r.t. $P_\rho$ converges weakly to the uniform distribution on the 6th roots of unity in $\CCC \ \hat= \ \RRR^2$.
\end{enum(i)}
\label{2Dthm}
\end{thm}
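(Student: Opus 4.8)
The plan is to reduce everything to the enumerated two-dimensional model of \cite{AG14} by means of the enumeration Lemma \ref{enum}, and then to build $P_\rho$ as a de-periodized limit of the finite-volume periodic measures, transferring the finite-volume orientational order supplied by Theorem \ref{2Dthmenum}. The observation tying this to the geometry-dependent Hamiltonian $H$ is that, once $\alpha$ is small, the only way a hard-core configuration can have exactly six neighbors of every point in the thin annulus $A_{1,1+\alpha}$ is the perturbed triangular pattern; Lemma \ref{enum} turns this into a parametrization of any admissible configuration by the triangular index lattice $I$, so admissible configurations are in bijection with the near-lattice configurations of \cite{AG14}. This bijection is what lets both the orientational estimate and the counting argument below be imported. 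I would first record the $z$-independence that yields $P_\rho\in\cap_{z>0}\cG^z$: since $H_{\Lambda,Y}\in\{0,\infty\}$ and, by the rigidity of the parametrization, the cardinality $N_\Lambda$ is a.s. constant under $\gamma^z_\Lambda(\cdot\,|\,Y)$ for each admissible $Y$, the only $z$-dependent factor of the reference measure, $z^{N_\Lambda}e^{-z\lambda(\Lambda)}$, is constant on the support and cancels in the normalization $Z^z_{\Lambda,Y}$. Hence $\gamma^z_\Lambda=\gamma^{z'}_\Lambda$ for all $z,z'>0$, so a single measure satisfying the DLR equations does so for every intensity.

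Next I would construct $P_\rho$. Fix $l=l(\rho)\in(1,1+\alpha)$ through $\rho=2/(\sqrt3\,l^2)$, so that $l\downarrow1$ corresponds to $\rho\uparrow2/\sqrt3$; the stated range of $\rho$ is precisely the range of such $l$. Push the finite-volume periodic measure $P_{n,l}$ of the enumerated two-dimensional model forward along $\omega\mapsto\{\omega(x):x\in I\}$ to a point process on $(\cX,\cA)$, and average over translations by a point chosen uniformly in the fundamental parallelogram $U_n$ to obtain a translation-invariant law $\bar\mu_{n,l}$. Tightness of $(\bar\mu_{n,l})_n$ follows from the hard-core lower bound on interpoint distances, which bounds $N_\Delta$ above locally, together with the uniform density $\rho$; I would extract a weakly convergent subsequence with limit $P_\rho$. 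That $P_\rho\in\cG^z$ is obtained by passing the DLR equation to the limit: the kernel $\gamma^z_\Lambda$ depends on the configuration only through its restriction to $\Lambda^{2(1+\alpha)}$, so $f\mapsto\gamma^z_\Lambda(f|\cdot)$ is suitably continuous, and the finite-volume consistency $\grg_\grD^z\grg_\grL^z=\grg_\grD^z$ survives $n\to\infty$, while $P_\rho(\cX_*^{\Lambda,z})=1$ holds because admissibility is inherited from the approximating measures through the rigidity.

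Properties (i) and (ii) are then immediate: the intensity of $\bar\mu_{n,l}$ equals the triangular-lattice density $2/(\sqrt3\,l^2)=\rho$ and is preserved in the limit, giving $\EEE_{P_\rho}[N_\Lambda]=\rho\,\lambda(\Lambda)$, and translation invariance comes from the averaging over $U_n$. The substance is (iii). Via the enumeration, the six $A_{1,1+\alpha}$-neighbors of the point $x$ nearest the origin correspond to the six lattice neighbors of the preimage of $x$ in $I$, and each edge vector $y-x$ equals $\nabla\hat\omega$ applied to the associated unit lattice direction, a sixth root of unity, up to an error controlled by the edge-length deviations. Theorem \ref{2Dthmenum}, whose bound is uniform in $n$ (the uniformity that Remark \ref{remark} secures in the three-dimensional analogue, and which persists through the limit), gives $\EEE_{P_\rho}[\,|\nabla\hat\omega(\triangle)-\textnormal{Id}|^2\,]\to0$ as $\rho\uparrow2/\sqrt3$. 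Hence the six edge directions concentrate on the sixth roots of unity, and since $y$ is drawn uniformly among the six neighbors and the perturbed triangular pattern carries the sixfold rotational symmetry, the law of $y-x$ converges weakly to the uniform distribution on the sixth roots of unity.

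The main obstacle I anticipate is the infinite-volume limit itself: establishing the Gibbs property of $P_\rho$ cleanly, since the combinatorial rigidity makes admissible configurations thin and one must verify that weak limits do not leak onto configurations with defects and that the DLR identity is genuinely preserved. Equally delicate is interleaving the two limits correctly—first $n\to\infty$ at fixed $\rho$ to build $P_\rho$, then $\rho\uparrow2/\sqrt3$ for the orientational statement—so that the $n$-uniform estimate of Theorem \ref{2Dthmenum} really transfers. A secondary point requiring care is handling the distinguished objects in (iii), namely the a.s. uniqueness of the point nearest the origin and the weak convergence of $y-x$, under the de-periodized, translation-invariant limit.
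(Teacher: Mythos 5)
Your overall architecture matches the paper's: Lemma \ref{enum} to identify $H$-admissible configurations with the enumerated near-lattice configurations of \cite{AG14}, pushforward of $P_{n,l}$ plus translation averaging to get translation-invariant finite-volume laws, a subsequential limit $P_\rho$, and Theorem \ref{2Dthmenum} (uniform in $n$) to deliver (iii). Properties (i), (ii) and the treatment of (iii) via the edge vectors $y_i-x=\nabla\hat\omega(\triangle_i)$ applied to unit lattice directions are essentially the paper's argument. However, the step you yourself flag as the main obstacle is a genuine gap, not merely a delicate point: you propose to get tightness from the hard-core bound on $N_\Delta$ and then ``pass the DLR equation to the limit'' because $\gamma^z_\Lambda(F|\cdot)$ is local and ``suitably continuous.'' It is local, but it is \emph{not} continuous: the indicator $e^{-H_{\Lambda,Y}}$ and the normalization $Z^z_{\Lambda,Y}$ jump as points of $Y$ cross the distance thresholds $1$ and $1+\alpha$, so weak (vague) convergence of the finite-volume measures does not give convergence of $\int\gamma^z_\Lambda(F|Y)\,\dd P_n(\dd Y)$. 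This is exactly the difficulty that forces the paper to work in the \emph{topology of local convergence}, where integrals of bounded \emph{measurable} local functions converge, and compactness there is obtained not from a moment bound but from a uniform bound on the specific entropy $I(\hat G_n)$. That bound in turn rests on Proposition \ref{Zbound} (the lower bound $\delta_0\otimes\lambda^{I_n\setminus\{0\}}(\Omega_{n,l})\geq(\pi r^2)^{|I_n|-1}$ via the tube of configurations $S_{n,l,r}$ around the scaled lattice) and the explicit computation of $I((G_n)_{\Lambda_n}|\Pi^1_{\Lambda_n})=-\ln\Pi^1_{\Lambda_n}(\cX_{n,l})$. None of this appears in your proposal, and without it the limit-passage step does not close.

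A second, smaller omission in the same step: to identify the conditional distribution of $G_n$ given $\cA_{\Lambda^c}$ with the kernel $\gamma_\Lambda(\cdot|\cdot)$ you need that the number of points in $\Lambda$ is a.s.\ determined by the configuration outside, which the paper secures by choosing $\kappa$ so that $\Lambda^\kappa\setminus\Lambda$ contains a connected ring of triangles of $K_2(\widetilde X_n)$, and then replaces $\hat G_n$ by the trimmed averages $\bar G_n$ over $\Lambda_n^\circ$ so that the consistency identity (\ref{consist}) actually holds for every translate before taking the limit. Your remark on $z$-independence (the factor $z^{N_\Lambda}$ is constant on the support and cancels in $Z^z_{\Lambda,Y}$) is correct and is indeed the reason $P_\rho\in\cap_{z>0}\cG^z$, consistent with the paper's setting $z=1$ for the entropy computation.
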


Note that by translational invariance of $P_\rho$, property \emph{(iii)} holds when initially picking the closest point $x$ to any reference point $x_0\in\RRR^2$ instead of the origin. Hence the long-range orientational order, as neighbors of $x$ position themselves close to translates of the 6th roots of unity. The choice of $\alpha$ will be made somewhat explicit in the proof of Lemma \ref{enum}. The set of Gibbs measures $\cG^z$ is most likely independent of $z>0$, however we won't pursue the proof of this statement as it leads to geometric considerations that are not in the center of our analysis. 
\subsection{Proofs}
For a configuration $X\in \cX$, we say that $H(X)=0$ if for all $x,y\in X$, we have $|x-y|>1$ and $|X\cap A_{1, 1+\gra}(x)|=6$. This is the same as having $H_{\grL, X}(X)=0$ for any $\grL\Subset\RRR^2$. For a configuration $\emptyset \not=X\in \cX$ with $H(X)=0$, we can define a simplicial complex $K(X)$ consisting of zero, one and two cells defined as follows. The set of zero cells $K_0(X)$ is $X \subset \RRR^2$. The set of one cells $K_1(X)$ are edges between zero cells of distance between $1$ and $1+\gra$, and the two cells are triangles with sides in $K_1(X)$. We will see in the following Lemma, that by definition of $H$ and some geometric considerations, for $\gra$ small enough, the graph defined by the one and two skeleton of this complex is locally, and therefore also globally isomorphic to the triangular lattice $I=\ZZZ+\tau \ZZZ$ with $\tau=e^{\frac{i\pi}{3}}$ with edge set $E=\{\{i,j\}\subset I: |i-j|=1\}$. The set of triangles surrounded by three edges in $E$ is denoted by $\cT$, these are two cells if we regard $I$ as a simplicial complex.

The most important lemma linking the theorem above to \cite[Thm. 4.1]{AG14} is the following.

\begin{lem} With the choice of a small enough $\gra$, we have for any configuration $X \in \cX$ with $H(X)=0$, that the graph defined by the one and two skeletons of $K(X)$ is isomorphic to the triangular lattice $I$. In other words, there is a bijective map $\omega: I\to X$ such that for all $i,j\in I$: $|i-j|=1$ if and only if $|\omega(i)-\omega(j)|\in(1,1+\gra)$.
\label{enum}
\end{lem}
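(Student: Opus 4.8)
The plan is to first establish a purely local rigidity statement and then upgrade it to a global isomorphism by a developing argument. Fix a point $x\in X$ and let $y_1,\dots,y_6$ be its six neighbours in $A_{1,1+\alpha}(x)$, labelled by increasing angle around $x$, with radii $r_k=|y_k-x|\in(1,1+\alpha)$ and consecutive angular gaps $\phi_1,\dots,\phi_6$ summing to $2\pi$. First I would record the elementary identity $|y_k-y_{k+1}|^2=r_k^2+r_{k+1}^2-2r_kr_{k+1}\cos\phi_k$ and combine it with the packing condition $|y_k-y_{k+1}|>1$, which forces $\cos\phi_k<(r_k^2+r_{k+1}^2-1)/(2r_kr_{k+1})$ and hence $\phi_k>\pi/3-c\alpha$ for a universal constant $c$. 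Since the six gaps sum to $2\pi=6\cdot\pi/3$, this simultaneously traps every gap in a window $(\pi/3-c\alpha,\ \pi/3+5c\alpha)$, so the six neighbours necessarily sit near the vertices of a regular hexagon.

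The heart of the matter is the \emph{flower claim}: for $\alpha$ small enough each consecutive pair $y_k,y_{k+1}$ is itself an edge, i.e.\ $|y_k-y_{k+1}|\in(1,1+\alpha)$, so that the six triangles $xy_ky_{k+1}$ tile a neighbourhood of $x$ and the link of $x$ in $K(X)$ is a $6$-cycle. The crude estimate above only yields $|y_k-y_{k+1}|<1+C\alpha$ with $C>1$, which is not sufficient, so here I would argue by contradiction using the exactly-six condition at a neighbouring point. Suppose $|y_1-y_2|\geq 1+\alpha$. Then $y_1$, being itself of degree six, must have a neighbour $z$ lying (angularly, around $y_1$) on the $y_2$-side of the ray from $y_1$ to $x$, at distance in $(1,1+\alpha)$ from $y_1$; running the same hexagon estimate around $y_1$ locates $z$ within $O(\alpha)$ of the position that $y_2$ would occupy in the perfect lattice. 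One then checks, by two explicit distance estimates valid for small $\alpha$, that such a $z$ is either within $1+\alpha$ of $x$---making $z$ a seventh neighbour of $x$ and contradicting $|X\cap A_{1,1+\alpha}(x)|=6$---or within $1$ of $y_2$, contradicting the packing condition. This dichotomy forces $y_1\sim y_2$ and establishes the flower claim; the same estimates also show that distinct edges of $K(X)$ do not cross, so $K(X)$ is a genuine planar triangulation and a simplicial $2$-manifold.

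It remains to pass from the local flower structure to the global isomorphism. Since every vertex now has a hexagonal link there is no boundary, so the carrier of $K(X)$ is simultaneously open (invariance of domain) and closed (local finiteness of $X$ and of the triangles) in $\RRR^2$, hence equals all of $\RRR^2$ and is in particular simply connected. Thus $K(X)$ is an infinite, connected, simply connected planar triangulation in which every vertex has degree exactly six, which by the combinatorial Gauss--Bonnet relation carries zero curvature everywhere. Concretely I would build $\omega$ by development: choose a base vertex $x_0\in X$, set $\omega(0)=x_0$, send the generator $1\in I$ to one neighbour of $x_0$, and let the cyclic order of the flower of $x_0$ fix the image of $\tau$. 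Propagating $\omega^{-1}$ along edges, the flower at each newly reached vertex supplies a consistent cyclic labelling of its six neighbours, so the triangular-lattice adjacencies are matched triangle by triangle; simple connectivity rules out monodromy and makes the labelling globally well defined. Degree-six-ness at every vertex together with connectedness then upgrades the resulting graph homomorphism $I\to K(X)$ to a bijection, and by construction $|i-j|=1$ holds exactly when $|\omega(i)-\omega(j)|\in(1,1+\alpha)$.

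I expect the flower claim of the second paragraph to be the main obstacle. The packing and annulus constraints on their own only pin the six neighbours of $x$ down to $O(\alpha)$ and do not by themselves prevent a consecutive pair from drifting past distance $1+\alpha$; the argument must therefore feed the exactly-six condition at an adjacent vertex back onto $x$, and it is precisely the quantitative balance in that dichotomy---whether the displaced neighbour $z$ intrudes on $x$ or on $y_2$---that dictates how small $\alpha$ must be chosen.
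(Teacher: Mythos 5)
Your proposal is correct and follows the paper's own two-stage strategy: first the local ``flower'' structure (the six neighbours of each $x$ sit near a regular hexagon and consecutive ones are themselves adjacent, forced by feeding the exactly-six condition at an adjacent vertex back through a kissing-number estimate), then a development/monodromy argument to globalize. For the flower claim the paper argues directly at the neighbour $y$: if a point angularly adjacent to $y$ around $x$ lay just outside $A_{1,1+\alpha}(y)$, there would not be enough angular room around $y$ for its own six neighbours at mutual distance $>1$; you instead produce an auxiliary neighbour $z$ of $y_1$ and let $z$ crowd either $x$ or $y_2$ --- the same mechanism with different bookkeeping, and both versions leave the quantitative choice of $\alpha$ implicit. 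The only genuine divergence is in the globalization. The paper develops $\omega: I\to X$ along a spanning tree and kills monodromy by deforming loops in $I$ (removing $2$-cells one at a time), which uses only the manifest simple connectivity of the triangular lattice and never needs to know that the triangles of $K(X)$ do not overlap, though it must then argue surjectivity separately via connectivity of $K(X)$. Your route first shows that $K(X)$ geometrically triangulates all of $\RRR^2$ (open-plus-closed carrier together with non-crossing of edges) and then concludes by covering-space theory, which delivers injectivity and surjectivity in one stroke; the price is the extra non-crossing step, which does hold for small $\alpha$ since two crossing edges would force four points of $X$ to be pairwise at distance in $(1,1+2\alpha)$, impossible in the plane.
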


Later on, we will choose $\gra$ small enough such that Lemma \ref{enum} and Theorem \ref{2Dthm} both work for that $\gra$. From the proof of the lemma it will be obvious that the choice of $\gra$ doesn't need to be particularly small for it (and any smaller choice) to work. 

\begin{proof}
We define for $i\in I$ its closest neighborhood $N(i)\subset I$ by $N(i)=\{j\in I: |i-j|\leq 1\}$. Let $X \in \cX$ such that $H(X)=0$. A map $\omega: N(i)\to X$ is called a local isomorphism at $i$ if for all $j,k\in N(i)$, we have $|j-k|=1$ if and only if $|\omega(j)-\omega(k)|\in(1,1+\gra)$. By taking $\gra>0$ small enough, we can ensure that for all $i\in I$ and $x\in X$ there is a local isomorphism $\omega$ at $i$ such that $\omega(i)=x$. To see this, observe that as $\alpha\to 0$, for every $y\in A_{1,1+\gra}(x)$ there are exactly two points $y_1,y_2\in A_{1,1+\gra}(x) \setminus\{y\}$ such that $|y_i-y|\to 1$, for other $z\in A_{1,1+\gra}(x) \setminus\{y\}$, we have $\liminf_{\gra\to 0} |z-y|\geq\sqrt 3$. Since we know that $|X\cap A_{1, 1+\gra}(y)|=6$, a simple geometric consideration related to the kissing problem, gives that $y_1, y_2\in A_{1, 1+\gra}(y)$, since if $y_i \not \in A_{1, 1+\gra}(y)$ for $i\in \{1,2\}$, for $\gra$ small enough there was not enough space to place 6 points in $A_{1, 1+\gra}(y)$ having distance bigger than 1 from each other and from $y_i$. To be more precise, for all $i\in I$ and $x\in X$ there will be twelve such local isomorphisms taking rotations and reflection into account. We fix $\gra$ small enough such that the local isomorphism property holds, since it holds for any small enough $\gra$, we can choose $\gra$ to be smaller than $\sqrt3-1$.

Let us construct a map $\omega: I\to X$ as follows. We fix an arbitrary $x_0\in  X$ and define $\omega|_{N(0)}$ to be one of the six orientational preserving local isomorphism at $0$ with $\omega(0)=x_0$. Fix a spanning tree $T$ of $I$. For each $i\in I$, there is a unique path on nearest neighbors in $T$ connecting $0$ to $i$. Since there are local isomorphism at each pair of points of $I$ and $X$, we can successively, uniquely extend $\omega$ to vertices of $T$ by choosing the unique of the six orientation preserving local isomorphisms that is consistent with $T$. This is to say that if for a neighbor $i$ of $j$ in $T$, we already assigned a point $\omega(i)$ then we already choose a local isomorphism at $i$ with $i\mapsto \omega(i)$. Let us assign $j$ to the point in $X$ which is determined by this local isomorphism. Now, there is only one local isomorphism at $j$, which is consistent with the local isomorphism chosen at $i$ in the sense that $i$ has identical images under the two local isomorphisms. We use this local isomorphism to proceed with the construction and map all neighbors of $j$ in $T$ into $X$.

It remains to show that the map $\omega: I\to X$ is an isomorphism. To conclude $\omega$ is an isomorphism onto its image, we fix a loop $\gamma$ starting and ending in $i\in I$ composed of a path in $T$ and an edge between $i$ and one of its neighbors in $I$ to which it is not connected in $T$. We need to show that the map induced along $\gamma$ with an initial orientational preserving local isomorphism $\omega|_{N(i)}$ at i, maps to a loop in $K(X)$ starting and ending in $\omega(i)$. To this end we can show a seemingly more general but equivalent statement. Take any loop $\gamma=(i_0, i_1, i_2, \dots, i_n)$ at $0\in I$ (i.e. $i_0=i_n=0$) and $x\in X$, fix a local isomorphism at $0$ with $0\mapsto x$ and show that the map induced along $\gamma$ maps $\gamma$ to a loop $\omega(\gamma)$ in $X$ at $x$. Here $\omega$ is a locally defined along the curve $\gamma$.

We can deform the loop $\gamma$ to the boundary of a two cell that contains 0 by successively "removing" two cells that intersect $\gamma$ and are inside of it. By removing a two cell, we mean one of the following. Two subsequent edges $(i_{k-1}, i_{k})$, $(i_{k}, i_{k+1})$ of $\gamma$, we can exchange for the unique edge $(i_{k-1},i_{k+1})$ if $|i_{k-1}-i_{k+1}|=1$, or we can exchange one edge $(i_k, i_{k+1})$ of $\gamma$ for two edges $(i_k, j)$ and $(j,i_{k+1})$ in $I$. For every such transformation of $\gamma$, we obtain a modified $\gamma'$ and a map $\omega'$ that is uniquely determined by the local isomorphism at $i_{k}$ and is the unique extension of the local isomorphism at 0 along $\gamma'$. Note that $\omega=\omega'$ on the domain that they are both defined and $\omega(\gamma)$ is closed if and only if $\omega'(\gamma')$ is. When after removing finitely many two cells, we arrive at $\gamma'=(0, i, j, 0)$ being the boundary of a two cell that contains the origin. Since $\omega'|_{\gamma'}$ should be the unique extension of the local isomorphism at 0 along $\gamma$, we see that $\omega'(\gamma‘)$ is closed and therefore so is $\omega(\gamma)$.

It remains to show that $\omega$ is surjective. Take now a curve $\hat \gamma$ in $K(X)$ from $x_0$ to some $y\in K(X)$. Note that $K(X)$ is a connected graph, as for small enough $\gra$ and $x\not=y$ we can always find a neighbor $z$ of $x$ which is closer to $y$ than $x$. The curve $\gamma$ corresponds to a curve $\gamma$ in $I$ from $0$ to some $i\in I$. Applying the procedure from above to the concatenation of the path from $0$ to $i$ in $T$ and the reverse of $\gamma$, we see that $\omega(i)=y$.
\end{proof}

This lemma can be also proved with the formalism of \v Cech cohomology using the de Rham isomorphism and can be generalized to configurations with point defects (missing points). The usefulness of the \v Cech cohomology and de Rham's theorem was pointed out to us by Franz Merkl. We decided to give another proof using less formalism.

To construct $P_\rho$, we use measures on periodic configurations. For $l>1$ and $n\in\NNN$, let us define measures $P_{n,l}$ on $n$-periodic configurations as in \cite{AG14}. A periodic, enumerated configuration $\omega\in\Omega^{per}_{n, l}$ is a map $I\to\RRR^2$ such that Theorem \ref{2Dthmenum} hold true for this choice of $\alpha$.

\begin{equation}
\omega(i+nj)=\omega(i)+lnj \quad \textrm{for all}\ i, j \in I.
\label{periodic}
\end{equation}

It suffices to define an $n$-periodic, enumerated configuration on a set of $n^2$ representatives $I_n\subset I$ as equation (\ref{periodic}) uniquely defines the configuration on the complement $(I_n)^c$. The event of admissible, $n$-periodic, enumerated configurations $\Omega_{n, l}\subset \Omega^{per}_{n, l}$ is defined by the properties $(\Omega 1)-(\Omega 3)$:

$(\Omega 1) \quad |\omega(i)-\omega(j)|\in (1, 1+\gra)$ for all $\{i,j\}\in E$.

For $\omega\in\Omega$ we define the extension $\hat\omega: \RRR^2\to\RRR^2$ such that $\hat\omega(i)=\omega(i)$ if $i\in I$, and on the closure of any triangle $\triangle\in\cT$, the map $\hat\omega$ is defined to be the unique affine linear extension of the mapping defined on the corners of $\triangle$.

$(\Omega 2)\quad$ The map $\hat\omega: \RRR^2\to\RRR^2$ is injective.

$(\Omega 3) \quad$ The map $\hat\omega$ is orientation preserving, this is to say that $\det(\nabla\hat\omega(x))>0$ for all $\triangle\in\cT$ and $x\in\triangle$ with the Jacobian $\nabla\hat\omega: \cup\cT\to\RRR^{2\times2}$.

Define the set of \emph{admissible, $n$-periodic, enumerated configurations} as

\[
\Omega_{n, l}=\{\omega\in\Omega_{n, l}^{per}\ |\ \omega\ \textrm{satisfies}\ (\Omega1)\textrm{--}(\Omega3)\}.
\]

Let the probability measure $P_{n, l}$ be

\[
P_{n, l}(A)=\frac{\delta_0\otimes\lambda^{I_n \setminus\{0\}} (\Omega_{n, l}\cap A)}{\delta_0\otimes\lambda^{I_n \setminus\{0\}} (\Omega_{n, l})}
\]

for any Borel measurable set $A\in \cF_n=\bigotimes_{i\in I_n}\cB(\RRR^2)$, thus $P_{n, l}$ is the uniform distribution on the set $\Omega_{n, l}$ with respect to the \emph{reference measure} $\delta_0\otimes\lambda^{I_n \setminus\{0\}}$. The first factor in this product refers to the component $\omega(0)$. The parameter $l$ in the definition of $\Omega_{n,l}$ and $P_{n,l}$ controls the density of periodic configurations such that $\rho=\frac{2}{l^2\sqrt{3}}$. We quote Theorem 4.1 from \cite{AG14} which will be the major ingredient of the proof of Theorem \ref{2Dthm}.

\begin{thm} For any $0<\alpha$ small enough one has

\begin{equation}
\lim_{l\downarrow 1}\sup_{n\in\NNN}\sup_{\triangle\in\cT} E_{P_{n,l}}[\ |\nabla\hat\omega(\triangle)-\textnormal{Id} |^2\ ]=0
\end{equation}

with the constant value of the Jacobian $\nabla\hat\omega(\triangle)$ on the set $\triangle\in\cT$.
\label{2Dthmenum}
\end{thm}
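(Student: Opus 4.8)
\section*{Proof proposal for Theorem \ref{2Dthmenum}}

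The plan is to prove Theorem \ref{2Dthmenum} as the two-dimensional counterpart of Theorem \ref{thm1}, following the identical template: a polyhedral rigidity estimate feeds into the Friesecke--James--M\"uller geometric rigidity Theorem \ref{fries}, periodicity is then exploited through a Pythagoras argument to isolate the scaled identity, and finally the residual edge-length deviations are controlled by an area computation that telescopes because of the boundary conditions. The two-dimensional setting is in fact simpler than the three-dimensional one treated above: the only cells in the tessellation of the triangular lattice $I=\ZZZ+\tau\ZZZ$ are triangles, so merely the two-dimensional analogue of Lemma \ref{triangle} is needed (no octahedra, hence no analogue of Lemma \ref{octahedron}), and the volume formula (\ref{heron}) is replaced by the classical Heron formula for the area of a triangle.

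First I would record the rigidity estimate for triangles: there is a constant $C_1>0$ such that for every linear map $A\colon\RRR^2\to\RRR^2$ with $\det(A)>0$ and the unit edge vectors $w_1,w_2,w_3$ of the equilateral reference triangle, $\textnormal{dist}^2(A,\textnormal{SO}(2))\leq C_1\sum_{i=1}^3(|Aw_i|-1)^2$; in two dimensions this is elementary, since a triangle is rigid and its shape depends smoothly on its three side lengths. Applying it on each $\triangle\in\cT$ to $A=\nabla\hat\omega|_\triangle$ (using $(\Omega1)$ and $(\Omega3)$ to guarantee $\det>0$) and summing, with the orthogonality of functions supported on disjoint triangles, yields
\[
\|\ \textnormal{dist}(\nabla\hat\omega,\textnormal{SO}(2))\ \|_{L^2(U_n)}^2\leq C\sum_{\triangle\in\cT_n}\sum_{\{x,y\}\in\cE(\triangle)}(|\omega(x)-\omega(y)|-1)^2 .
\]
Next I would invoke Theorem \ref{fries} with $d=2$, $U=U_n$ and $v=\hat\omega|_{U_n}\in W^{1,2}(U_n,\RRR^2)$ to produce a rotation $R(\omega)\in\textnormal{SO}(2)$ with $\|\nabla\hat\omega-R(\omega)\|_{L^2(U_n)}^2\leq K\,\|\textnormal{dist}(\nabla\hat\omega,\textnormal{SO}(2))\|_{L^2(U_n)}^2$, where $K$ is independent of $n$ by the scaling invariance of the rigidity constant recorded in Remark \ref{remark}. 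Since (\ref{periodic}) makes $\hat\omega-l\,\textnormal{Id}$ periodic, its gradient is $L^2$-orthogonal to the constant matrix $l\,\textnormal{Id}-R(\omega)$, so Pythagoras gives $\|\nabla\hat\omega-l\,\textnormal{Id}\|_{L^2(U_n)}^2\leq\|\nabla\hat\omega-R(\omega)\|_{L^2(U_n)}^2$, exactly as in Lemma \ref{side}.

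The area step then proceeds as in the proof of Theorem \ref{thm1} with (\ref{heron}) replaced by Heron's formula $\textnormal{area}(\triangle)=\tfrac14\sqrt{(a+b+c)(-a+b+c)(a-b+c)(a+b-c)}$. Taylor-expanding at the unit equilateral triangle shows its partial derivatives in the edge lengths are positive, so $\gra$ can be fixed small enough that the area of each triangle is minimized at side lengths $1$ and that $\sum_i(a_i-1)^2\leq c_1\gra\,(\textnormal{area}(\triangle)-\textnormal{area}(\triangle_1))$ holds on $(1,1+\gra)$. Combining with the rigidity bound gives $\|\nabla\hat\omega-l\,\textnormal{Id}\|_{L^2(U_n)}^2\leq c_1\gra\,c\sum_{\triangle\in\cT_n}(\textnormal{area}(\hat\omega(\triangle))-\textnormal{area}(\triangle))$, and by injectivity $(\Omega2)$ together with (\ref{periodic}) the total image area is independent of $\omega$, so evaluating at $\omega_l$ telescopes the sum to $|U_n|(l^2-1)$. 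Translation invariance of the reference measure and of $\Omega_{n,l}$ under the shifts $\psi_b$, $b\in I$, makes the distribution of $\nabla\hat\omega(\triangle)$ depend only on the $I$-class of $\triangle$, so that the supremum over $\triangle\in\cT$ reduces to the finitely many triangle classes of a single period cell; the area identity $E_{P_{n,l}}[\,\|\nabla\hat\omega-l\,\textnormal{Id}\|_{L^2(U_n)}^2\,]=|U_n|\,E_{P_{n,l}}[\,|\nabla\hat\omega(\triangle)-l\,\textnormal{Id}|^2\,]$ then yields the bound $E_{P_{n,l}}[\,|\nabla\hat\omega(\triangle)-l\,\textnormal{Id}|^2\,]\leq c_1\gra\,c\,(l^2-1)$, uniform in $n$ and in $\triangle$. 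Finally the triangle inequality passing from $l\,\textnormal{Id}$ to $\textnormal{Id}$, using that $|\nabla\hat\omega(\triangle)-l\,\textnormal{Id}|$ is uniformly bounded for $l\in(1,1+\gra)$, concludes the proof on letting $l\downarrow1$.

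Since the statement is quoted verbatim from \cite[Thm.~4.1]{AG14}, I expect no genuinely new obstacle: the whole argument is a transcription of the proof of Theorem \ref{thm1} into two dimensions. The only point deserving care is the uniformity in $n$, which---just as in the three-dimensional case---rests entirely on the scale-invariance of the geometric rigidity constant $C(U_n)$ from Remark \ref{remark}; without it the estimate would degrade as the period grows and the supremum over $n$ could fail to be finite.
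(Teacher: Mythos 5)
Your proposal is correct and follows essentially the same route as the paper, which does not reprove Theorem \ref{2Dthmenum} but quotes it from \cite[Thm.~4.1]{AG14} and notes that the argument is exactly the two-dimensional version of the proof of Theorem \ref{thm1}: triangle rigidity estimate, the Friesecke--James--M\"uller theorem with the scale-invariant constant of Remark \ref{remark}, the Pythagoras step from periodicity, and the Heron-formula area computation telescoping to $|U_n|(l^2-1)$. Your reconstruction matches this template step for step, including the key observation (stated explicitly in the paper) that the area of a triangle with side lengths near $1$ is uniquely minimized by the unit equilateral triangle.
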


We note that the theorem holds for any $\gra\in(0,\sqrt 3 -1)$, however we omit the proof of this which is just a more careful consideration of arguments in the proof of \cite[Theorem 4.1]{AG14} and will refer to small enough $\alpha$. The main observation needed for this explicit range of $\gra$ where the theorem holds is, that the area of triangles with side lengths in the range $[1,\sqrt 3)$ is uniquely minimized by the regular triangle with side length 1. This observation is then utilized like in the similar proof of Theorem \ref{thm1} in the 3D case. We note that Theorem \ref{2Dthmenum} might work with $\gra \geq \sqrt 3 -1$, however looking for the optimal upper bound is not the concern of this paper.  

In the following we construct $P_\rho$ as a limit of translational invariant versions of $P_{n,l}$ and show that this measure is a Gibbs measure in $\cG^z$ for any $z>0$. We follow ideas from \cite{DDG12} to construct a limiting measure. Fix $l>1$ and define the measures $G_n$ on $(\cX, \cA)$ by specifying it's marginal $(G_n)_{\grL_n}$ on $(\cX_{\grL_n},\cA_{\grL_n}')$
\[
(G_n)_{\grL_n}=\left(\frac{1}{\grl( \Lambda_n)}\int_{ \grL_n} \textrm{Im}[P_{n,l}]\circ \theta_r\ \dd r\right)_{\grL_n},
\]
with the image measure $\textrm{Im}[P_{n,l}]$ of $P_{n,l}$ under the map $\textrm{Im}:  \omega\mapsto \{\omega(x) :  x\in I\}$ and the domain $\Lambda_n=l\{x+y\tau : x,y\in[-n/2, n/2)\}$. The averaging over $r\in \grL_n$ is necessary to obtain a translational invariant measure on the torus, since $\omega(0)=0$ holds $P_{n,l}-$a.s.. The measure $G_n$ is then defined by having i.i.d. projections on the sets $\{ \grL_n+inl\}_{i\in I}$, which form a tiling of $\RRR^2$. In order to have translational invariant probability measures on $(\cX, \cA)$, we consider the averaged measures
\[
\hat G_{n}=\frac{1}{\grl( \Lambda_n)} \int_{ \Lambda_n}G_{n}\circ \theta_r\ \dd r
\]
By definition and the periodicity of $G_n$, $\hat G_n$ are translational invariant. We will show that the sequence $(\hat G_n)_{n\in\NNN}$ is tight in the \emph{topology of local convergence} on translational invariant probability measures on $\cX$ generated by $P\to \int f \dd P$ for functions $f$ that are $\cA_\grL$-measurable for some $\Lambda\Subset \RRR^2$. Such functions we call local and denote the set of local functions by $\cL$.

The only difference to the definitions after Lemma 5.1. in \cite{DDG12} are in the nature of the measures $(G_n)_{\grL_n}$. In our case $(G_n)_{\grL_n}$ are measures that inherit geometric constraints from the structure of $P_{n,l}$ that are defined on toruses of different size. In \cite{DDG12} on the contrary, the authors use a measures $G^{z}_{\grL_n, \bar\omega}$ that have fixed boundary condition $\bar\omega$ on the complement of $\grL_n$.

For a shift invariant probability measure $P$ on $(\cX, \cA)$ and $\grL\Subset \RRR^2$ define the measure $P_{\grL}:=P\circ  \textrm{pr}_{\grL}^{-1}$ and the \emph{relative entropy} w.r.t. $\Pi_{\grL}^z$ as

\[
I(P_{\grL} | \Pi_{\grL}^z):=
\begin{cases}
\int f \ln f \dd \Pi_{\grL}^z \quad &\textrm{if } P_{\grL}<<\Pi_{\grL}^z \textrm{ with density } f  \\
\infty \quad &\textrm{otherwise}
\end{cases}.
\]
The \emph{specific entropy} of $P$ w.r.t. $\Pi^z$ is then defined by
\[
I(P):= \lim_{n\to \infty} \frac{1}{\grl(\grD_n)} I(P_{\grD_n} | \Pi_{\grD_n}^z),
\]
where $\Delta_n\Subset\RRR^2$ is a cofinite increasing sequence of sets. We refer to \cite{G88} and \cite{GZ93} for existence and properties of the specific entropy. We will set $z=1$ and compute entropies relative to $\Pi_{\grD_n}^1$. By \cite[Proposition 2.6]{GZ93}, the sublevel sets of $I$ are sequentially compact in the topology of local convergence. Therefore, we only need to show that the specific entropies of the measures $\{\hat G_n\}_{n\in\NNN}$ are bounded by some constant. We start with a proposition that provides lower bound on the partition sum.

\begin{prop}
For all $\gra\in (0,1]$ and $l\in(1,1+\gra)$, there is an $r=r(\gra, l)\in(0,1/2)$ such that for $n\in\NNN$, we have
\begin{eqnarray}
\delta_0\otimes\lambda^{I_n\setminus\{0\}}(\Omega_{n,l})\geq (\pi r^2)^{|I_n|-1}.
\end{eqnarray}
\label{Zbound}
\end{prop}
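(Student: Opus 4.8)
The plan is to bound $\delta_0\otimes\lambda^{I_n\setminus\{0\}}(\Omega_{n,l})$ from below by the reference measure of an explicit product neighbourhood of the scaled lattice $\omega_l$, where $\omega_l(i)=li$. Concretely, I would fix a radius $r$ (to be determined) and consider the box
\[
B_r:=\{\omega\in\Omega^{per}_{n,l}\ :\ \omega(0)=0\ \text{ and }\ |\omega(i)-li|<r\ \text{ for all } i\in I_n\setminus\{0\}\},
\]
which is the product of one pinned point $\omega(0)=0$ together with $|I_n|-1$ open disks of radius $r$. Under the reference measure $\delta_0\otimes\lambda^{I_n\setminus\{0\}}$ its mass is exactly $(\pi r^2)^{|I_n|-1}$, since the Dirac factor pins $\omega(0)=0=\omega_l(0)$ and each of the remaining coordinates ranges over a disk of Lebesgue area $\pi r^2$. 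The entire proof then reduces to choosing $r\in(0,1/2)$, depending only on $\gra$ and $l$ and \emph{not} on $n$, such that $B_r\subseteq\Omega_{n,l}$.

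To verify $(\Omega 1)$ on $B_r$, note that for every edge $\{i,j\}\in E$ one has $|\omega_l(i)-\omega_l(j)|=l$, and perturbing both endpoints by less than $r$ changes the edge length by at most $2r$ by the triangle inequality; for edges crossing the boundary of the fundamental cell the outside endpoint is a periodic translate of a perturbed point of $I_n$, so the same bound applies. Hence $(\Omega 1)$ holds as soon as $2r<\min(l-1,\,1+\gra-l)$, a positive quantity because $l\in(1,1+\gra)$. For the remaining two conditions I would first record a uniform control on the linear part: writing $u:=\hat\omega-l\,\textnormal{Id}$, the function $u$ is $nT$-periodic and on each triangle $\triangle\in\cT$ it is affine with linear part $A_\triangle-l\,\textnormal{Id}$, where $A_\triangle=\nabla\hat\omega|_\triangle$. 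Solving for $A_\triangle-l\,\textnormal{Id}$ from its action on the two edge vectors of $\triangle$ and using that every triangle of $I$ is congruent to a single fixed nondegenerate unit triangle (whose edge vectors form a basis of determinant $\sqrt3/2$), one obtains $|A_\triangle-l\,\textnormal{Id}|\le Cr$ with a constant $C$ depending only on this reference geometry, hence uniform in $n$ and in $\triangle$. Condition $(\Omega 3)$ then follows: $\det(l\,\textnormal{Id})=l^2>0$ and the determinant is continuous, so $\det A_\triangle>0$ once $Cr$ is small.

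The step I expect to require the most care is the global injectivity $(\Omega 2)$, established uniformly in $n$. Here I would argue that $u$, being continuous and piecewise affine with each affine piece $Cr$-Lipschitz, is globally $Cr$-Lipschitz on $\RRR^2$: estimating $u$ along the straight segment joining two points, which meets only finitely many triangles and has slope at most $Cr$ on each, gives $|u(x)-u(y)|\le Cr\,|x-y|$. Consequently $\hat\omega(x)=\hat\omega(y)$ forces $l(x-y)=u(y)-u(x)$, whence $l\,|x-y|\le Cr\,|x-y|$; choosing $r<l/C$ makes this impossible unless $x=y$, which is exactly the injectivity of $\hat\omega\colon\RRR^2\to\RRR^2$ demanded by $(\Omega 2)$.

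Finally I would set $r$ equal to the minimum of $1/2$, of $\tfrac13\min(l-1,\,1+\gra-l)$, and of a value small enough to guarantee both $\det A_\triangle>0$ and $Cr<l$. This $r=r(\gra,l)\in(0,1/2)$ depends only on $\gra$ and $l$, and by the three verifications above the inclusion $B_r\subseteq\Omega_{n,l}$ holds for every $n\in\NNN$. Monotonicity of the reference measure then yields $\delta_0\otimes\lambda^{I_n\setminus\{0\}}(\Omega_{n,l})\ge\delta_0\otimes\lambda^{I_n\setminus\{0\}}(B_r)=(\pi r^2)^{|I_n|-1}$, which is the claimed bound. The essential point, and the reason the argument survives the limit in $n$, is that all three admissibility conditions are controlled by purely local (per-edge and per-triangle) quantities together with one global Lipschitz estimate whose constant $C$ is determined solely by the fixed reference triangle.
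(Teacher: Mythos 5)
Your proposal is correct and follows essentially the same route as the paper: both lower-bound the measure by that of a product neighbourhood of the scaled configuration $\omega_l(i)=li$ of radius $r$ (the paper's set $S_{n,l,r}$), verify $(\Omega 1)$ via the same $2r$ edge-length perturbation bound, and obtain the factor $(\pi r^2)^{|I_n|-1}$ from the product structure of the reference measure. The only cosmetic difference is in checking $(\Omega 2)$: the paper argues via the monotonicity condition $\langle v,\nabla\hat\omega(x)v\rangle>0$ and the non-closing of image curves, whereas you use a global Lipschitz bound on $\hat\omega-l\,\textnormal{Id}$ — both are valid, $n$-uniform, and interchangeable here.
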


\begin{proof}
For $r>0$, we define, like in (3.2) in \cite{HMR}, the set of configurations which are close to the scaled, enumerated, standard configuration $\omega_l(i)=li$ for $i\in I$:

\begin{eqnarray}
S_{n,l,r}=\{\omega\in \Omega_{n,l}^{per}\ | \  |\omega(i)-\omega_l(i)|<r \textrm{ for all } i\in I \}.
\end{eqnarray}

For sufficiently small $r>0$, depending on $\gra$ and $l$, we conclude, like in the proof of \cite[Lemma 3.1]{HMR}, that $S_{n,l,r}\subset\Omega_{n,l}$. To prove this inclusion, we have to show the properties $({\Omega 1})\textrm{--}({\Omega 3})$ for all $\omega\in S_{n,l,r}$. Let us compute for $(i,j)\in E$ and $\omega\in S_{n,l,r}$:

\begin{eqnarray*}
||\omega(i)-\omega(j)|-l|&=&||\omega(i)-\omega(j)|- |\omega_l(i)-\omega_l(j)| |\\
&\leq& |\omega(i)-\omega_l(i)| + |\omega(j)-\omega_l(j)|<2r.
\end{eqnarray*}

If we choose $2r<\max\{l-1, 1+\gra-l\}<1$, then $\omega$ satisfies $({\Omega 1})$. Condition $({\Omega 2})$ is a consequence of the inequality $\langle v, \nabla\hat\omega(x)v\rangle>0$ for all $v\in\RRR\setminus\{0\}$, and for all $x\in\RRR^2$ where $\hat\omega$ is differentiable. This inequality holds for small enough r since $\nabla\hat\omega$ is close to the identity uniformly on $\RRR^2$. Hence $\hat\omega$ is a bijection onto its image. Here we applied a theorem from analysis which states that a $\cC^1$-map $f$ from an open convex domain $U\subset\RRR^n$ into $\RRR^n$ with $\langle v, \nabla f(x)v\rangle>0$ for all $v\in\RRR^n\setminus\{0\}$ and $x\in U$ is a diffeomorphism onto its  image. However, $\nabla\hat\omega(x)$ is only piecewise differentiable, but on the straight line $L$ connecting $x,y\in \RRR^2$ with $x\not=y$, there are only finitely many points $z\in \RRR^2\cap L$ where the curve $(\hat\omega(ty+(1-t)x))_{t\in (0,1)}$ is not differentiable. Assume that $\langle v, \nabla\hat\omega(x)v\rangle>0$ holds whenever $\hat\omega$ is differentiable in $x$. The curve is piecewise linear, and on each of these pieces, the derivative of the curve forms an acute angle with $y-x$, therefore the curve cannot be closed. Thus, the condition $({\Omega 2})$ is satisfied in the case of a sufficiently small $r$. Furthermore, condition $({\Omega 3})$ is satisfied by $\omega_l$, therefore also by $\omega$ if $r$ is sufficiently small. Hence $S_{n,l,r}\subset\Omega_{n,l}$ for some $r\in(0,1/2)$, and we conclude
 
\begin{eqnarray*}
\delta_0\otimes\lambda^{I_n\setminus\{0\}}(\Omega_{n,l})\geq\delta_0\otimes\lambda^{I_n\setminus\{0\}}(S_{n,l,r})=(\pi r^2)^{|I_n|-1}
\end{eqnarray*}

where the last equality is obtained by integrating over each $\omega(i)$ with $i\not= 0$ successively along a fixed spanning tree of $I_n$ which gives a factor $\pi r^2$, and considering that $\omega_l(0)=0$ and that the measure $\delta_0\otimes\lambda^{I_n\setminus\{0\}}$ fixes $\omega(0)=0$.
\end{proof}

\begin{prop} The set $\{I(\hat G_n): n\in\NNN \}$ is bounded, thus the set $\{\hat G_n: n\in\NNN \}$ is sequentially compact in the topology of local convergence. Therefore, there is a sequence $n_k\to\infty$ and a shift invariant measure $P_\rho$ on $(\cX, \cA)$ such that $\lim_{k\to\infty}\int f \dd G_{n_k}=\int f \dd P_\rho$ for any $f\in \cL$.
\label{locallimit}
\end{prop}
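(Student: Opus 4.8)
The plan is to bound the specific entropies $\{I(\hat G_n)\}_{n\in\NNN}$ uniformly in $n$ and then invoke the sequential compactness of the sublevel sets of $I$ from \cite[Proposition 2.6]{GZ93}. By construction $G_n$ consists of independent copies of the block law $(G_n)_{\Lambda_n}$ over the tiles $\{\Lambda_n+inl\}_{i\in I}$, and $\hat G_n$ is its translation average. Combining additivity of relative entropy over independent tiles with convexity of relative entropy under the averaging $\theta_r$, and dividing by the volume of a growing union of tiles, one obtains, exactly as in the construction of \cite{DDG12}, the bound
\[
I(\hat G_n)\ \le\ \frac{1}{\lambda(\Lambda_n)}\,I\big((G_n)_{\Lambda_n}\,\big|\,\Pi^1_{\Lambda_n}\big).
\]
Everything thus reduces to showing that the right-hand relative entropy is of order $\lambda(\Lambda_n)$.

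First I would compute this relative entropy through Janossy densities. Write $Z_n:=\delta_0\otimes\lambda^{I_n\setminus\{0\}}(\Omega_{n,l})$. Under $\mu:=(G_n)_{\Lambda_n}$ every configuration carries exactly $|I_n|=n^2$ points, so the Janossy density $j^\mu$ lives in the $n^2$-point sector, where the reference $\Pi^1_{\Lambda_n}$ has the constant Janossy density $e^{-\lambda(\Lambda_n)}$. Since $\mu$ is the uniform law — of total mass $Z_n\,\lambda(\Lambda_n)$ in the labelled position space $(\RRR^2)^{I_n}$ — of an $\Omega_{n,l}$-configuration translated by a uniform shift $r\in\Lambda_n$, and since forgetting the labels is a local diffeomorphism onto its image, $j^\mu$ is constant on its support. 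The rigidity Lemma \ref{enum} identifies the number of labellings of a generic configuration as $6n^2$ (the six orientation-preserving local isomorphisms at the origin, times the $n^2$ choices of origin), rather than the naive $n^2!$. Hence $j^\mu=6n^2/(Z_n\lambda(\Lambda_n))$ on its support, giving
\[
I\big((G_n)_{\Lambda_n}\,\big|\,\Pi^1_{\Lambda_n}\big)=E_\mu[\ln j^\mu]+\lambda(\Lambda_n)=\lambda(\Lambda_n)-\ln Z_n+\ln(6n^2)-\ln\lambda(\Lambda_n).
\]

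Next I would insert the volume estimate of Proposition \ref{Zbound}, namely $Z_n\ge(\pi r^2)^{n^2-1}$, so that $-\ln Z_n\le (n^2-1)\ln\!\big(1/(\pi r^2)\big)=O(n^2)$; together with $\lambda(\Lambda_n)=\tfrac{\sqrt3}{2}\,l^2n^2$ and the fact that $\ln(6n^2)-\ln\lambda(\Lambda_n)=O(\ln n)$, dividing by $\lambda(\Lambda_n)$ yields
\[
I(\hat G_n)\ \le\ 1+\frac{2\ln\!\big(1/(\pi r^2)\big)}{\sqrt3\,l^2}+o(1),
\]
a bound independent of $n$ (for the fixed $l$ and the corresponding $r=r(\gra,l)$). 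By \cite[Proposition 2.6]{GZ93} the sublevel sets of $I$ are sequentially compact, so some subsequence $\hat G_{n_k}$ converges in the topology of local convergence to a shift-invariant $P_\rho$, i.e. $\int f\,\dd\hat G_{n_k}\to\int f\,\dd P_\rho$ for all $f\in\cL$. Finally I would replace $\hat G_{n_k}$ by $G_{n_k}$: since the block marginal $(G_n)_{\Lambda_n}$ is already invariant under translations of the torus $\Lambda_n$, for any local $f$ and every shift $r$ keeping the support of $f\circ\theta_{-r}$ inside a single tile one has $\int f\circ\theta_{-r}\,\dd G_n=\int f\,\dd G_n$; the excluded $r$ form a boundary layer of relative area $o(1)$, so $\int f\,\dd\hat G_n=\int f\,\dd G_n+o(1)$ as the tiles grow, and therefore $\int f\,\dd G_{n_k}\to\int f\,\dd P_\rho$ as well.

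The main obstacle is the entropy computation of the second paragraph, where one must check that the translation-averaged, label-forgetting construction leaves $j^\mu$ constant and, crucially, that the multiplicity of labellings is only polynomial in $n$. It is precisely here that Lemma \ref{enum} is indispensable: without the rigidity it provides, a generic $n^2$-point configuration would admit $\sim n^2!$ labellings, the surviving term $\ln(n^2!)\sim n^2\ln n$ would dominate, and the per-volume entropy would diverge like $\ln n$ instead of staying bounded. A secondary technical point is the justification of the opening inequality, which rests on the independence of the tiles and the convexity of relative entropy and is routine once the block structure of $G_n$ is used.
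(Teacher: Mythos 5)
Your proof follows essentially the same route as the paper: the reduction $I(\hat G_n)\le\lambda(\Lambda_n)^{-1}\,I\bigl((G_n)_{\Lambda_n}\,|\,\Pi^1_{\Lambda_n}\bigr)$, the identification via Lemma \ref{enum} of $(G_n)_{\Lambda_n}$ as the uniform distribution on its support with an explicitly computable normalizing constant, the lower bound on that constant from Proposition \ref{Zbound}, and the sequential compactness of entropy sublevel sets from \cite{GZ93}. Your bookkeeping is in places slightly more careful than the paper's (the labelling multiplicity $6n^2$, and the explicit passage from $\hat G_{n_k}$ back to $G_{n_k}$, which the paper leaves implicit), but these are refinements of the same argument rather than a different one.
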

\begin{proof} As also noted in the proof of \cite[Proposition 5.3]{DDG12}, the definition of $\hat G_n$ implies that
\[
I^z(\hat G_n)=\frac{1}{\grl( \Lambda_n)}I\left((G_n)_{ \grL_n}|\Pi_{\grL_n}^1\right).
\]
The relative entropy $I\left((G_n)_{ \grL_n}|\Pi_{\grL_n}^1\right)$ can be explicitly computed as follows. The measure $(G_n)_{ \grL_n}$ is supported on configurations that have $n^2$ points in $ \grL_n$ and if $\grL_n$ is folded into a torus, then each point $x$ has exactly six neighbors in the annulus $A_{1,1+\alpha}(x)$ around it and no points closer than distance one. These configurations $\cX_{n,l}$ are images of enumerated configurations $\cX_{n,l}=(\textrm{Im}\ \Omega_{n,l})_{ \grL_n}$. By Lemma \ref{enum}, $(G_n)_{\grL_n}$ is the uniform distribution on these configurations with respect to $\Pi_{\grL_n}^1$. The density of $(G_n)_{\grL_n}$ w.r.t. $\Pi_{\grL_n}^1$ is given by $f=\1_{\cX_{n,l}}/ \Pi_{\grL_n}^1(\cX_{n,l})$. To find the constant $\Pi_{\grL_n}^1(\cX_{n,l})$ more explicitly, consider the expectation
\[\Pi^1_{\grL_n}[g]=e^{-\grl(\grL_n)} \sum_{k=0}^\infty \int_{\grL_n^k}\frac{1}{k!}g(\{x_1, \dots, x_k\}) \ \grl^k|_{\grL_n^k}(\dd x_1, \dots, \dd x_k)
\]
Consequently, we have  
\[\Pi_{\grL_n}^1(\cX_{n,l})=\frac{e^{-\grl(\grL_n)}}{n^2}\grl(\grL_n) \ \delta_0\otimes\lambda^{I_n \setminus\{0\}} (\Omega_{n, l}).
\]
This follows since a factor $\frac{e^{-\grl(\grL_n)}}{(n^2)!}$ comes from the density of $ \Pi_{\grL_n}^1$ conditioned on $n^2$ points with respect to $\ \grl^{(n^2)}|_{\grL_n^{(n^2)}}(\dd x_1, \dots, \dd x_n^2)$. Then conditioned on the position of $x_1$, the volume of the allowed configurations by their shift invariance on the torus is $(n^2-1)!\ \delta_0\otimes\lambda^{I_n \setminus\{0\}} (\Omega_{n, l})$, furthermore the first point can be distributed uniformly in $\grL_n$. The relative entropy is $I\left((G_n)_{ \grL_n}|\Pi_{\grL_n}^1\right)=-\ln\left(\Pi_{\grL_n}^1(\cX_{n,l})\right)$ and the specific entropy can be bounded using Proposition \ref{Zbound} and $\grl( \Lambda_n)=n^2 l^2 \sqrt{3}/2$ for big enough $n$, we obtain
\begin{align*}
I\left((G_n)_{ \grL_n}\right)=-\frac{\ln\left(\Pi_{\grL_n}^1(\cX_{n,l})\right)}{\grl( \Lambda_n)}&= 1+\frac{n^2}{\grl( \Lambda_n)}-\frac{\ln \left( \grl( \Lambda_n)\right)}{\grl( \Lambda_n)}-\frac{\ln\left( \delta_0\otimes\lambda^{I_n \setminus\{0\}} (\Omega_{n, l})\right)}{\grl( \Lambda_n)}\\
&\leq1+\frac{n^2}{\grl( \Lambda_n)}-\frac{\ln \left( \grl( \Lambda_n)\right)}{\grl( \Lambda_n)}-\frac{|I_n-1| \ln(\pi r^2)}{\grl( \Lambda_n)}\\
&\leq1+\frac{2-2\ln (\pi r^2)}{l^2 \sqrt{3}}.
\end{align*}
\end{proof}

The next proposition shows that $P_\rho$ is an infinite-volume Gibbs measure. Note that $\hat G_n$ and $\Lambda_n$ depend on $l>1$ which we fixed previously.

\begin{prop}
The measure $P_\rho$ is an infinite-volume Gibbs measure $P_\rho\in\cap_{z>0} \cG^z$.
\label{Gibbs}
\end{prop}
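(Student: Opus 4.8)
The plan is to verify the two defining properties of a measure in $\cG^z$ directly: the support condition $P_\rho(\cX_*^{\grL,z})=1$ and the DLR identity $P_\rho\,\grg^z_\grL=P_\rho$ for every $\grL\Subset\RRR^2$ and every $z>0$; by the remark following the definition of $\cG^z$ this is exactly the assertion $P_\rho\in\cap_{z>0}\cG^z$. A first and crucial reduction is that the specification does not in fact depend on $z$ on admissible boundary conditions. Fix $Y\in\cX_*^{\grL,z}$. By Lemma \ref{enum} any configuration $X_\grL Y_{\grL^c}$ with $H_{\grL,Y}(X)=0$ is isomorphic to the triangular lattice, so the number of points it places in $\grL$ is pinned by the near-lattice frame imposed by $Y$ on $\grL^{1+\gra}\setminus\grL$; in particular $N_\grL$ is $\grg^z_\grL(\cdot\mid Y)$-almost surely a constant $k=k(\grL,Y)$. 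Since $\Pi^z_\grL$ restricted to $\{N_\grL=k\}$ equals $z^k e^{-z\grl(\grL)}$ times the corresponding Lebesgue weight, the factor $z^k e^{-z\grl(\grL)}$ is common to the numerator and to $Z^z_{\grL,Y}$ and cancels. Hence $\grg^z_\grL(\cdot\mid Y)$ is the uniform (Lebesgue) distribution on admissible extensions and is independent of $z$. It therefore suffices to establish the DLR identity for the single value $z=1$ used in Proposition \ref{locallimit}, and the conclusion holds for all $z>0$ at once.

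For the support condition I would show that $P_\rho$ is concentrated on configurations $X$ with $H(X)=0$. Under $\hat G_n$ a realization is, inside each tile $\grL_n+inl$, the image of an admissible periodic configuration and hence a perfect near-triangular lattice; the only places where the hard-core and ``exactly six neighbors'' conditions can fail are within the finite range $2(1+\gra)$ of the boundaries between adjacent tiles. Because these boundary layers occupy a fraction of $\grL_n$ tending to $0$ and because $\hat G_n$ is obtained by averaging over all shifts in $\grL_n$, the $\hat G_n$-probability that a fixed bounded region meets such a layer tends to $0$. Passing to the local limit along $n_k$---using that the defining conditions (pairwise distance strictly above one, and exactly six neighbors in the open annulus $A_{1,1+\gra}$) are stable on the generic configurations charged by the limit---yields $P_\rho(H=0)=1$, and every configuration with $H=0$ is admissible, so $P_\rho(\cX_*^{\grL,z})=1$.

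The heart of the proof is the DLR identity. Fix $\grL\Subset\RRR^2$ and a bounded local function $f$, and let $B_n$ be the event that the region of influence $\grL^{2(1+\gra)}$ lies in the interior of a single tile of the tiling underlying $G_n$. On $B_n$ the configuration in $\grL$ together with its relevant boundary comes from one copy of $P_{n,l}$, whose image is, by Lemma \ref{enum} and the uniformity of $P_{n,l}$, uniformly distributed over admissible near-lattice configurations; by the finite range $2(1+\gra)$ of the interaction and the $z$-independence established above, the conditional law of the points in $\grL$ given the configuration outside $\grL$ coincides on $B_n$ with $\grg^z_\grL(\cdot\mid\cdot)$. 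Consequently
\[
\left|\int f\,\dd\hat G_n-\int \grg^z_\grL(f\mid\cdot)\,\dd\hat G_n\right|\le 2\,\|f\|_\infty\,\hat G_n(B_n^c),
\]
and $\hat G_n(B_n^c)\to 0$ by the same shift-averaging argument as above. Letting $n=n_k\to\infty$, the left integral converges to $\int f\,\dd P_\rho$ since $f\in\cL$; if the right integral converges to $\int \grg^z_\grL(f\mid\cdot)\,\dd P_\rho$ we obtain $P_\rho=P_\rho\,\grg^z_\grL$ for every $\grL$ and hence $P_\rho\in\cap_{z>0}\cG^z$.

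The main obstacle is precisely this last passage to the limit, since the map $Y\mapsto\grg^z_\grL(f\mid Y)$ is only $\cA_{\grL^c}$-measurable and is discontinuous in the topology of local convergence: the hard-core and six-neighbor conditions jump when a point of $Y$ sits exactly at distance $1$ or $1+\gra$ from another, or exactly on $\partial\grL$. Following \cite{DDG12}, I would handle this by showing that the set of such degenerate boundary configurations is $P_\rho$-null---this uses that the reference measure is Lebesgue-absolutely-continuous away from the pinned point, so that $P_\rho$-almost surely all inter-point distances avoid the critical values $1$ and $1+\gra$---and that the approximating measures $\hat G_{n_k}$ put no mass on them either. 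A Portmanteau/continuous-mapping argument for the bounded, $P_\rho$-almost-everywhere continuous function $Y\mapsto\grg^z_\grL(f\mid Y)$ then delivers the required convergence and completes the proof.
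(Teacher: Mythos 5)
Your overall strategy coincides with the paper's: identify the conditional law of the approximating measures inside a single tile with the kernel $\grg_\grL$, control the contribution of configurations whose relevant neighborhood straddles a tile boundary by the shift-averaging, and pass to the local limit; your event $B_n$ plays exactly the role of the paper's restricted average $\bar G_n$ over $\grL_n^\circ=\{r:\grL^\kappa+r\subset\grL_n\}$ (justified there via \cite[Lemma 5.7]{GZ93}), and your observation that $N_\grL$ is pinned and the $z$-factors cancel is implicit in the paper's fixing of $z$ and use of Lemma \ref{enum}. The one substantive divergence is your final limit passage. You treat the convergence $\hat G_{n_k}\to P_\rho$ as if it were weak convergence and therefore propose a Portmanteau/continuous-mapping argument, which would require proving that $Y\mapsto\grg_\grL(f\mid Y)$ is $P_\rho$-a.e.\ continuous --- a genuinely delicate task here, since both the admissibility indicator and the normalization $Z_{\grL,Y}$ jump under small perturbations of $Y$, and your sketch of why the degenerate set is null is not a proof. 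The paper avoids this entirely: the topology of local convergence used in Proposition \ref{locallimit} (via the entropy compactness of \cite[Proposition 2.6]{GZ93}) is generated by \emph{all} bounded local measurable test functions, and $Y\mapsto\grg_\grL(F\mid Y)$ is such a function (it depends only on $Y_{\grL^{2(1+\gra)}\setminus\grL}$), so the convergence of $\int\grg_\grL(F\mid\cdot)\,\dd\bar G_{n_k}$ is automatic with no continuity needed. The same remark simplifies your verification of the support condition: $P_\rho(H_{\grL,X}(X)=0)=\lim_k\hat G_{n_k}(\cdots)=1$ for each fixed $\grL$ directly, without any stability argument. Finally, when identifying the conditional law on a single tile with $\grg_\grL$, you should make explicit the paper's device of choosing $\kappa$ so that $\grL^\kappa\setminus\grL$ contains a connected ring of triangles of $K_2(\widetilde X_n)$: this is what guarantees, via Lemma \ref{enum}, both that $N_\grL$ is determined by the outside configuration and that the admissible completions of the outside configuration are exactly those with $H_{\grL,X_{\grL^c}}(X_\grL)=0$; your appeal to the range $2(1+\gra)$ alone does not quite deliver this.
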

\begin{proof}
Fix $\Lambda \Subset \RRR^2$, $z>0$ and $\rho<2/\sqrt{3}$ large enough such that $2/(\sqrt{3}(1+\gra)^2)<\rho$ where $\gra$ is such that Lemma \ref{enum} holds with that $\alpha$. Let $l>1$ such that $\rho=2/(l^2\sqrt{3})$. For $X\in \cX$, let $\widetilde X_n$ be the periodic extension of $X_{\grL_n}$ to $\cX$, i.e. $\widetilde X_n=\cup_{i\in I} X_{\grL_n}+lni$. Let $\kappa>0$ be so big such that $\grL^{\kappa}\setminus \grL$ contains a connected ring of triangles from $K_2(\widetilde X_n)$ for $G_n$-almost all $X$ for all $n\in\NNN$. Consequently, for all $n\in\NNN$ large enough such that $\grL^{\kappa}\subset\grL_n$, the number of points in $\grL$ conditioned on $X_{\grL^c}$ is $G_n$-almost surely determined by the configuration in $\grL^{\kappa}\setminus \grL$. The measure $(G_n)_{\Lambda_n}$ is the uniform distribution of enumerable, allowed configurations with $n^2$ points on the torus. By Lemma \ref{enum}, the conditional distribution of $X_\grL$ given $X_{\grL^c}$ under $G_n$ is therefore the uniform distribution on configurations $X_\grL$ such that $H_{\grL, X_{\grL^c}}(X_\grL)=0$. Uniform distribution makes sense, as the number of points in $\grL$ is almost surely constant with respect to the conditioned measure. Therefore, the factorized version of the conditional distribution of $G_n$ given $\cA_{\grL^c}$ is given by $\gamma_\grL(\cdot | \cdot)$, this is to say that
\begin{equation}
G_n(F)=\int_{\cX}\gamma_{\grL}(F|Y) G_n(\dd Y)
\label{consist}
\end{equation}
for any $F\in \cA$ and $n\in\NNN$ big enough for $\grL^{\kappa}\subset\grL_n$. Since $z$ is fixed, we can omitted it as a superscript in $\gamma^z$.
  
The rest of the proof is as the proof of \cite[Prop. 5.5.]{DDG12}. Define $\grL^\circ_n:=\{r\in\RRR^2 :\grL^{\kappa}+r\subset \grL_n \}$ and the (subprobability) measures
\[
\bar G_n:= \frac{1}{|\grL_n|}\int_{\grL^\circ_n} G_n\circ \theta_r^{-1} \ \dd r.
\]
Then $\int f \dd \hat G_n - \int f \dd \bar G_n\to 0$ by the same argument as in \cite[Lemma 5.7]{GZ93}, therefore $P_\rho$ can also be seen as an accumulation point of the sequence $(\bar G_n)$. Let $F \in \cup_{\grD\Subset \RRR^2}\cA_{\grD}$ be a local set, using (\ref{consist}), we obtain for $r\in\grL^\circ_n$
\[
G_n\circ \theta_r^{-1}(F)=\int_{\cX} \gamma_{\grL}(F|Y)G_n\circ \theta_r^{-1}(\dd Y).
\]
Therefore averaging over $r\in\grL^\circ_n$ gives
\begin{equation}
\bar G_n(F)=\int_{\cX} \gamma_{\grL}(F|Y)\bar G_n(\dd Y).
\label{Gibbseq}
\end{equation}
Since the integrand on the right is a local function of $Y$, we can set $n=n_k$ and let $k\to \infty$, that gives (\ref{Gibbseq}) for $P_\rho$ instead of $\bar G_n$. Since local sets generate the $\grs$-algebra $\cA$, (\ref{Gibbseq}) holds for $P_\rho$ and $F\in\cA$, which by monotone convergence shows that $P_\rho$ is an infinite-volume Gibbs measure.
\end{proof}
\begin{proof}[Proof of Theorem \ref{2Dthm}]
In Propositions \ref{Gibbs} and \ref{locallimit}, we showed the existence of a translational invariant measure $P_\rho\in\cap_{z>0}\cG^z$ which is the local limit of the measures $(G_{n_k})_{k\geq 1}$, therefore $P_\rho$ satisfies property (ii). Property (i) holds as it can be expresses by a local function and $\EEE_{G_{n_k}}[|X\cap B|]=\rho \grl(B)$ for any $k\geq 1$ by the periodic boundary conditions. Similarly, property (iii) can be expressed by local functions depending on $\{x_0, x_1, ..., x_6\}\cap \grL_n$, where $x_0$ is the closest random point to the origin and $x_i$ is $i$'th closest point to $x_0$. For $n$ large enough we have $G_{n_k}(|\{x_0, x_1, ..., x_6\}\cap \grL_n|=7)=1$ for any $k\geq 1$ and therefore $P_\grr(|\{x_0, x_1, ..., x_6\}\cap \grL_n|=7)=1$. By Theorem \ref{2Dthmenum} we have
\begin{equation}
\lim_{\grr\uparrow 2/\sqrt{3}}\sup_{k\geq 1}\EEE_{G_{n_k}}\left[ \sum_{i=1}^6|\nabla\hat\omega(\triangle_i)-\textnormal{Id} |^2\right]=0,
\label{2Dthmenumresult}
\end{equation}
where $\{\triangle_i\}_{1\leq i\leq 6}$ are the random six triangles in $\cT$ such that one of their vertices is mapped to $x_0$ under $\omega$. Let $f:\CCC^6\to \RRR$ be continuous, bounded and permutation invariant. We use the natural identification of topological spaces $\CCC\hat = \RRR^2$. Let $y_i=x_i-x_0$. By continuity of $f$, there is a constant $c>0$ such that
\begin{equation}
\left|f(y_1, \dots, y_6)-f(e^{i\pi/3}, e^{i2\pi/3}, \dots, e^{i2\pi}) \right|\leq c \sum_{i=1}^6|\nabla\hat\omega(\triangle_i)-\textnormal{Id} |^2
\label{fdiffbound}
\end{equation}
$G_{n_k}$-a.s. for any $k\geq 1$. Combining equations (\ref{2Dthmenumresult}) and (\ref{fdiffbound}), we obtain that
\[
\lim_{\grr\uparrow 2/\sqrt{3}} \EEE_{P_\grr}\left[\left|f(y_1, \dots, y_6)-f(e^{i\pi/3}, e^{i2\pi/3}, \dots, e^{i2\pi}) \right|\right]=0
\]
which concludes the proof of property (iii).
\end{proof}

\textbf{Acknowledgment}: I'd like to thank Franz Merkl for fruitful discussions and his suggestions.

\end{document}